\pgfplotsset{compat=newest}
\newtheorem{theorem}{Theorem}[section]
\newtheorem{lemma}[theorem]{Lemma}
\newtheorem{remark}{Remark}[section]
\newtheorem{definition}{Definition}[section]
\newtheorem{proposition}[theorem]{Proposition}
\begin{document}
\title{Efficient Numerical Methods for Gas Network Modeling and Simulation\thanks{This work is funded by the European Regional Development Fund (ERDF/EFRE: ZS/2016/04/78156) within the Research Center Dynamic Systems: Systems Engineering (CDS).}}
\author{Yue Qiu\thanks{Corresponding author. Max Planck Institute for Dynamics of Complex Technical Systems, SandtorStra{\ss}e 1, 39108, Magdeburg, Germany. E-mail: qiu@mpi-magdeburg.mpg.de, y.qiu@gmx.us}\enspace \href{https://orcid.org/0000-0003-0360-0442}{\includegraphics[scale=0.1]{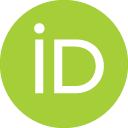}}, Sara Grundel\thanks{Max Planck Institute for Dynamics of Complex Technical Systems, SandtorStra{\ss}e 1, 39108, Magdeburg, Germany. E-mail: grundel@mpi-magdeburg.mpg.de}\enspace
\href{https://orcid.org/0000-0002-0209-6566}{\includegraphics[scale=0.1]{figures/orcid.png}}, Martin Stoll\thanks{Technische Universit{\"a}t Chemnitz, Faculty of Mathematics, Reichenhainer Stra{\ss}e 41, 09107
Chemnitz, Germany. E-mail: martin.stoll@mathematik.tu-chemnitz.de}, Peter Benner\thanks{Max Planck Institute for Dynamics of Complex Technical Systems, SandtorStra{\ss}e 1, 39108, Magdeburg, Germany. E-mail: benner@mpi-magdeburg.mpg.de}\enspace     \href{https://orcid.org/0000-0003-3362-4103}{\includegraphics[scale=0.1]{figures/orcid.png}}
}
\date{~}
\maketitle

\begin{abstract}
    We study the modeling and simulation of gas pipeline networks, with a focus on fast numerical methods for the simulation of transient dynamics. The obtained mathematical model of the underlying network is represented by a nonlinear differential algebraic equation (DAE).
    {With our modeling, we reduce the number of algebraic constraints, which correspond to the $(2,2)$ block in our semi-explicit DAE model, to the order of junction nodes in the network, where a junction node couples at least three pipelines. We can furthermore ensure that the $(1, 1)$ block of all system matrices including the Jacobian is block lower triangular by using a specific ordering of the pipes of the network.} We then exploit this structure to propose an efficient preconditioner for the fast simulation of the network. We test our numerical methods on benchmark problems of (well-)known gas networks and the numerical results show the efficiency of our methods.
    
\vspace{0.3cm}
    \textbf{Keywords}: gas networks modeling, isothermal Euler equation, directed acyclic graph (DAG), differential algebraic equation (DAE), preconditioning
\end{abstract}
\setcounter{section}{0}

\section{Introduction}
Natural gas is one of the most widely used energy sources in the world, as it is easily transportable, storable and usable to generate heat and electricity. Even though research on the transient gas network dates back to the 1980s \cite{Osiadacz1984, Osiadacz1987}, often only  stationary solutions of the gas network are computed. This is also reasonable as the variation in a classically operated gas transportation networks enforces no need for a truly transient simulation. However as we move from classical energy sources to renewable energy sources in which we may use the gas pipelines to deal with flexibility from volatile energy creation, the need for fast transient simulation will increase. In recent years, research on natural gas networks focuses on a variety of topics: transient simulations~\cite{Osiadacz1984, Osiadacz1987, Tao1998, GruHR16, Gugat2015}, optimization and control~\cite{Steinbach2007, Zlotnik2015, Hante2017}, time splitting schemes for solving the parabolic flow equations~\cite{Zhou2000}, discretization methods~\cite{Egger2018, Osiadacz1989}, and model sensitivity study~\cite{Chaczykowski2009} to mention a few. It is obvious that efficient simulation techniques are needed both for design and for control.

{The objective of this paper is to speed up the computations at the heart of each simulation. For that we make use of a discretization that respects the hyperbolic nature of the problem by using a finite volume method (FVM), as well as exploiting the network structure to create good properties for the computations. We start as it is standard for modeling gas transport in a pipeline by the one-dimensional isothermal Euler equation, which is a partial differential equation
    (PDE).} {We introduce the necessary discrete variables for the pressure and the flux on each pipe and make sure to use as little algebraic equations as possible when considering a network of pipes.
        By further exploiting the structure of the system, we propose a preconditioner that enables fast solution of such a nonlinear equation using a preconditioned Krylov solver at each Newton iteration.}


The structure of this paper is as follows. We introduce the incompressible isothermal Euler equation for the gas dynamics modeling of each pipeline of the network in Section~\ref{sec_euler}, and we apply the finite volume method (FVM) to discretize the incompressible isothermal Euler equation in Section~\ref{sec_euler}. In Section~\ref{sec_net}, we introduce the details of gas network modeling starting from assembling all pipelines. This results in a set of nonlinear DAEs for the network model. We propose numerical algorithms to solve the resulting nonlinear DAE in  Section~\ref{sec_numerical} to simulate the gas network. We use benchmark problems from gas pipeline networks to show the efficiency and the advantage of our numerical algorithms in Section~\ref{sec_results}, and we draw conclusions in the last section.

\section{Gas Dynamics in Pipelines}\label{sec_euler}

In  a typical gas transport network the main components are pipelines (or pipes, for short). In this section, we will discuss the dynamics of gas transported along pipes. 

\subsection{1D Isothermal Euler Equation}

The dynamics of gas transported along pipes is described by the Euler equation, which represents the laws of mass conservation, momentum conservation, and energy conservation. In this paper, we assume that the temperature is constant throughout the gas network, leading to the isothermal Euler equations. Therefore, the energy equation can be neglected. This may seem unrealistic, but for onshore gas networks, in which the pipes are buried underground, the temperature along pipes does not change much. This assumption
greatly reduces the complexity of modeling and is widely used in the simulation of gas networks~\cite{Herran-Gonzalez2009,Herty2010, GruHKetal13,Gugat2015,Fuegenschuh2015}. 


Consider the 1D isothermal Euler equation over the spatial domain $[0,\ L]$ given by
\begin{subequations}
\begin{align}
    \frac{\partial}{\partial t}\rho &= -\frac{\partial}{\partial x} \varphi, \label{eqn:mass} \\
      \frac{\partial}{\partial t}\varphi &= - \frac{\partial}{\partial x} p - \frac{\partial}{\partial x} (\rho v^2) - g \rho \frac{\partial}{\partial x} h - \frac{\lambda(\varphi)}{2d} \rho v |v|, \label{eqn:momentum} \\
      p &= \gamma(T)z(p, T)\rho.\label{eqn:state}
\end{align}
\end{subequations}
Here, $\rho$ is the density of the gas ($kg/m^3$), $\varphi$ represents the flow rate and $\varphi = \rho v$ with $v$ the velocity of the gas ($m/s$), $d$ is the diameter of the pipe ($m$), $\lambda$ is the friction factor of the gas. Meanwhile, $p$ denotes the pressure of the gas
($N/m^2$), $T$ is the temperature of the gas ($K$), and $z$ denotes the compressibility factor. The conservation of mass is given by~\eqref{eqn:mass}, and the conservation of momentum is represented by~\eqref{eqn:momentum}, while the state equation~\eqref{eqn:state} couples the pressure with the density. 

By using the mass flow $q=a\varphi$ to substitute into~\eqref{eqn:mass}--\eqref{eqn:momentum}, where $a$ is the cross-section area of pipes, we get

\begin{subequations}
    \begin{align}
         \frac{\partial}{\partial t}\rho &= -\frac{1}{a} \frac{\partial}{\partial x} q, \label{eqn:mass_2} \\
          \frac{1}{a}\frac{\partial}{\partial t}q &= -\frac{\partial}{\partial x} p - \frac{1}{a^2}\frac{\partial}{\partial x} \frac{q^2}{\rho} - g \rho \frac{\partial}{\partial x} h - \frac{\lambda(q)}{2da^2} \frac{q|q|}{\rho}, \label{eqn:momentum_2} \\
           p &= \gamma(T)z(p,T)\rho. \label{eqn:state_2}
    \end{align}
\end{subequations}
For the isothermal case, the temperature $T$ equals $T_0$ throughout the network, then $\gamma(T) = \gamma(T_0) = \gamma_0$, and $z(p, T) = z(p, T_0) = z_0(p)$. Therefore, the compressibility factor $z(p, T)$ is only related to the pressure $p$ and we can rewrite~\eqref{eqn:mass_2}--\eqref{eqn:state_2} as
\begin{subequations}
    \begin{align}
        \frac{1}{\gamma_0} \frac{\partial}{\partial t}\frac{p}{z_0(p)} &= - \frac{1}{a} \frac{\partial}{\partial x} q,\label{eqn:mass_3} \\
          \frac{1}{a}\frac{\partial}{\partial t}q &= -\frac{\partial}{\partial x} p - \underbrace{\frac{\gamma_0}{a^2}\frac{\partial}{\partial x} \frac{q^2 z_0(p)}{p}}_{\text{inertia\ term}} - \underbrace{\frac{g}{\gamma_0}\frac{p}{z_0(p)} \frac{\partial}{\partial x} h}_{\text{gravity\ term}} - \underbrace{\frac{\lambda(q)\gamma_0}{2da^2} z_0(p) \frac{q|q|}{p}}_{\text{friction\ term}}.\label{eqn:momentum_3}
    \end{align}
\end{subequations}
For the inertia term, 
it is studied in~\cite{Herran-Gonzalez2009} that 
\[
    \frac{\gamma_0}{a^2}\frac{\partial}{\partial x}\frac{q^2z_0(p)}{p}\approx 10^{-3}\frac{\partial}{\partial x}p.
\]
Therefore, the inertia term can be neglected and this neglection greatly simplifies the model, which is standard in the study of gas networks~\cite{Hante2017,GruHR16,Zlotnik2015}. In this paper, we also use this simplification. Meanwhile, we will often assume that the elevation of pipes is homogeneous. The gravity term in~\eqref{eqn:momentum_3} then vanishes. However this term is easily treatable within our framework, which we will illustrate later in this section.

Now, we get the model that describes the dynamics of isothermal gas transported along homogeneous elevation pipes given by
\begin{subequations}
    \begin{align}
        \frac{\partial}{\partial t}\frac{p}{z_0(p)} &= - \frac{\gamma_0}{a} \frac{\partial}{\partial x} q,\label{eqn:mass_4}\\
        \frac{\partial}{\partial t}q &= -a\frac{\partial}{\partial x} p - \frac{\lambda(q)\gamma_0}{2da} z_0(p) \frac{q|q|}{p}.\label{eqn:momentum_4}
    \end{align}
\end{subequations}

The details of modeling the compressibility factor $z_0(p)$ and the friction factor $\lambda(q)$ are described in~\cite{BenGHetal17}.

\subsection{Finite Volume Discretization}\label{sec_fvm}
In this paper, the dynamics of the gas transported along pipes are described by the 1D isothermal incompressible Euler equation ($z_0(p)=1$) over the spatial domain $[0,\ L]$ with homogeneous elevation. According to~\eqref{eqn:mass_4}--\eqref{eqn:momentum_4} we have
\begin{subequations}\label{eqn:iso_euler}
    \begin{align}
        \frac{\partial}{\partial t}p + \frac{c}{a} \frac{\partial}{\partial x} q &=0,\label{eqn:mass_5}\\
        \frac{\partial}{\partial t}q +a\frac{\partial}{\partial x} p + \frac{c\lambda}{2da} \frac{q|q|}{p} &=0.\label{eqn:momentum_5}
    \end{align}
\end{subequations}
For simplification of notation we introduce $c=\gamma_0$ and we also assume $\lambda(q)\equiv\lambda$. The system~\eqref{eqn:mass_5}--\eqref{eqn:momentum_5} is nonlinear due to the friction term. For gas transportation pipes, the boundary condition at the inflow point $x=0$ is given by the prescribed pressure $p_s$, while the boundary condition at the outflow point $x=L$ is represented by the given mass flow (gas demand) $q_d$. Therefore, the boundary conditions for~\eqref{eqn:mass_5}--\eqref{eqn:momentum_5}
are
given as
\begin{equation}\label{eqn:bcs}
    \begin{cases}
        p = p_s, & \text{at}\ x=0,\\
        q = q_d, & \text{at}\ x=L.
    \end{cases}
\end{equation}
For the well-posedness and the regularity of the solution of the system~\eqref{eqn:iso_euler}--\eqref{eqn:bcs}, we refer to~\cite{EggKS17}. 


{More advanced numerical schemes for hyperbolic PDEs, such as the total variation diminishing (TVD) method~\cite{TorB00}, or the discontinuous Galerkin method (DG)~\cite{JohP86}, could be implemented at the next step of our research to investigate more complicated dynamics of the gas networks. The scope of our paper is to develop a systematic numerical methodology for the fast simulation of the network dynamics while taking numerical accuracy into account.}

For such an FVM discretization, we partition the domain as shown in Figure~\ref{fig:1d_part}.

\begin{figure}[H]
\centering
 \includegraphics[width=0.3\textwidth]{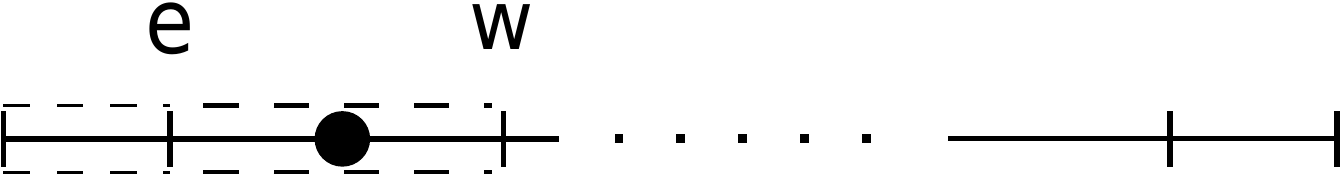}
 \caption{finite volume cells partition}\label{fig:1d_part}
\end{figure}

In Figure~\ref{fig:1d_part}, the left boundary of the control volume is denoted by `e' while the right boundary of the control volume is denoted by `w'. For the isothermal incompressible Euler equation~\eqref{eqn:mass_5}--\eqref{eqn:momentum_5}, we have two variables, i.e., the pressure $p$, and the mass flow $q$. Together with the boundary condition~\eqref{eqn:bcs}, we use two different control volume partitions for the pressure and mass flow nodes, which are shown in
Figure~\ref{fig:1d_p_q}.

\begin{figure}[H]
\centering
\subfigure[control volumes partition for $p$]
    {\label{fig:1d_p}
  \includegraphics[width=0.3\textwidth]{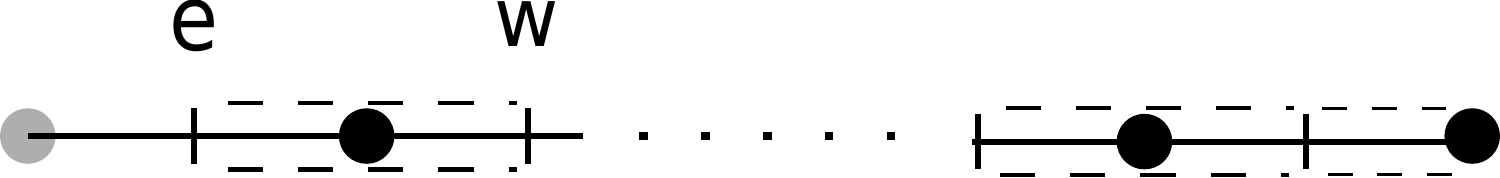}}
\qquad  \qquad
\subfigure[control volumes partition for $q$]
    {\label{fig:1d_q}
  \includegraphics[width=0.3\textwidth]{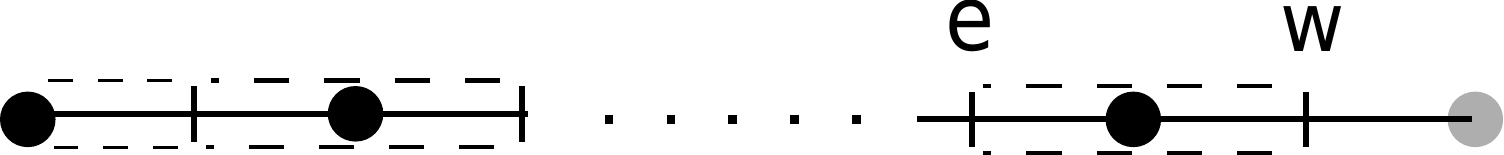}}
\caption{control volumes partition for $p$ and $q$}\label{fig:1d_p_q}
\end{figure}

To apply the FVM to discretize the PDE and the boundary condition, we integrate~\eqref{eqn:mass_5}--\eqref{eqn:momentum_5} over each control volume. To be specific, we integrate~\eqref{eqn:mass_5} over the pressure control volume in Figure~\ref{fig:1d_p}, and integrate~\eqref{eqn:momentum_5} over the mass flow control volume in Figure~\ref{fig:1d_q}.

To integrate~\eqref{eqn:mass_5} over the $i$-th control pressure control volume $C_i$, we have
\[
    \int_{C_i}\left(\partial_t p + \frac{c}{a}\partial_x q\right)\ dx  = 0.
\]
The discretization point in $C_i$ is either a virtual node along a pipe or a real node that connects two different pipes. Therefore, the coefficient $a$ of the PDE~\eqref{eqn:mass_5}--\eqref{eqn:momentum_5}, which represents the cross-section area of a pipe, may have a sudden change at the node in control volume $C_i$. Here, we use $C_i^{-}$ and $C_i^{+}$ to partition the control volume $C_i$ with $C_i=C_i^-\cup C_i^+$, and the lengths of $C_i^-$ and $C_i^+$ are $\frac{h_{i-1}}{2}$ and
$\frac{h_i}{2}$, respectively. This partition is shown in Figure~\ref{fig:ci_part}. 
\begin{figure}[H]
\centering
 \includegraphics[width=0.25\textwidth]{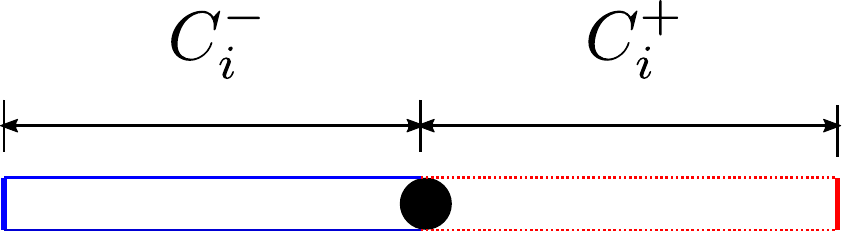}
 \caption{Separation of control volume $C_i$}\label{fig:ci_part}
\end{figure}

Therefore, we get
\[
    \int_{C_i}\partial_t p\ dx +\int_{C_i^{-}}\frac{c}{a}\partial_x q\ dx  + \int_{C_i^{+}}\frac{c}{a}\partial_x q\ dx = 0.
\]
Furthermore, by applying the midpoint rule, we get
\[
    \frac{h_{i-1}+h_i}{2}\partial_t p_i + \frac{c}{a_{i-1}}\left(q_i - \frac{q_{i-1}+q_i}{2}\right) + \frac{c}{a_{i}}\left(\frac{q_{i}+q_{i+1}}{2}-q_i\right) = 0, 
\]
i.e.,
\begin{equation}\label{eqn_int_com_het}
 \frac{h_{i-1}+h_i}{2} \partial_t p_i + c \left(-\frac{1}{2a_{i-1}}q_{i-1} + \left(\frac{1}{2a_{i-1}}-\frac{1}{2a_i}\right)q_i + \frac{1}{2a_i}q_{i+1}\right) = 0,
\end{equation}
where $p_i$ and $q_i$ are the values at the center of the control volume for $i=2,\dots,n-1$.
Similarly, for the rightmost control volume of the pressure in Figure~\ref{fig:1d_p}, we have
\begin{equation}\label{eqn_rb_com_het}
        \int_{C_n^-}\left(\partial_t p + \frac{c}{a}\partial_x q\right)\ dx \approx 
        \frac{h_{n-1}}{8}\partial_t p_{n-1} + \frac{3h_{n-1}}{8}\partial_t p_n + \frac{c}{2a_{n-1}}(q_n - q_{n-1}) = 0,
\end{equation}
where $p_n$ and $q_n$ are the values at the end of the pipe.
For the $i$-th mass flow control volume $C_i$ shown in Figure~\ref{fig:1d_q}, we integrate~\eqref{eqn:momentum_5} over it and get
\[
    \int_{C_i} \left(\partial_t q + a\partial_x p\right)\ dx  =  -\frac{c}{2}\int_{C_i} \frac{\lambda}{ad}\frac{q|q|}{p}\ dx.
\]
Applying the same partition of $C_i$ as in Figure~\ref{fig:ci_part}, we have,
\[
 \begin{split}
     \int_{C_i} \left(\partial_t q + a\partial_x p\right)\ dx & = \int_{C_i^{-}} \left(\partial_t q + a\partial_x p \right)\ dx + \int_{C_i^{+}} \left(\partial_t q + a\partial_x p \right)\ dx \\
  & \approx \frac{h_{i-1}}{2} \partial_t q_i + \frac{a_{i-1}}{2}(p_i - p_{i-1}) + \frac{h_{i}}{2} \partial_t q_i + \frac{a_{i}}{2}(p_{i+1} - p_{i}) \\
   & = \frac{h_{i-1}+h_i}{2} \partial_t q_i + \left(-\frac{a_{i-1}}{2}p_{i-1} + \frac{a_{i-1}-a_i}{2}p_i + \frac{a_i}{2}p_{i+1}\right),
 \end{split}
\]
and
\[
    \int_{C_i}\frac{\lambda}{ad}\frac{q|q|}{p}\ dx \approx \frac{1}{2}\left(\frac{h_{i-1}\lambda_{i-1}}{a_{i-1}d_{i-1}}+\frac{h_{i}\lambda_{i}}{a_{i}d_{i}}\right)\frac{q_i|q_i|}{p_i}.
\]
Therefore, for the $i$-th mass flow control volume $C_i$, we have
\begin{equation}\label{eqn_int_mom_het}
 \frac{h_{i-1}+h_i}{2} \partial_t q_i + \left(-\frac{a_{i-1}}{2}p_{i-1} + \frac{a_{i-1}-a_i}{2}p_i + \frac{a_i}{2}p_{i+1}\right)=-\frac{c}{4}\left(\frac{h_{i-1}\lambda_{i-1}}{a_{i-1}d_{i-1}}+\frac{h_{i}\lambda_{i}}{a_{i}d_{i}}\right)\frac{q_i|q_i|}{p_i}.
\end{equation}
Similarly, for the leftmost mass flow control volume, we get
\begin{equation}\label{eqn_lb_mom_het}
    \frac{3h_1}{8}\partial_t q_1 +\frac{h_2}{8}\partial_t q_2 + \frac{a_1}{2}(-p_1 + p_2) = -\frac{ch_1\lambda_1}{4a_1d_1}\frac{q_1|q_1|}{p_1}.
\end{equation}
where $p_1$ and $q_1$ are the values at the beginning of the pipe.
Associating~\eqref{eqn_int_com_het}--\eqref{eqn_lb_mom_het} with the boundary condition~\eqref{eqn:bcs}, we get
\begin{equation}\label{eqn:dis_sys}
    \underbrace{\begin{bmatrix}
  M_p & \\
      & M_q
    \end{bmatrix}}_{\mathcal{M}}
\begin{bmatrix}
 \partial_t p \\
 \partial_t q
\end{bmatrix}
=
    \underbrace{\begin{bmatrix}
 0 & K_{pq} \\
 K_{qp} & 0
    \end{bmatrix}}_{\mathcal{K}}
\begin{bmatrix}
 p \\
 q
\end{bmatrix}
+
    \overbrace{\underbrace{\begin{bmatrix}
 B_q \\
 0
    \end{bmatrix}}_{\mathcal{B}_q}
q_d}^{\text{right BC}}
+
    \overbrace{\underbrace{\begin{bmatrix}
 0 \\
 B_p
    \end{bmatrix}}_{\mathcal{B}_p}
p_s}^{\text{left BC}}+
\begin{bmatrix}
 0 \\
 g(p_s,p, q)
\end{bmatrix},
\end{equation}
where the mass matrices $M_p$ and $M_q$ are given by
\[
 M_p =  \begin{bmatrix}
	  \frac{h_1+h_2}{2} & 	& 	 &   	         &  \\
	                    & \frac{h_2+h_3}{2} & 	 & 	         & \\
	    &   & \ddots &               &  \\
	    &   &        &   \frac{h_{n-2}+h_{n-1}}{2}           &   \\
        &   &	 &  \frac{h_{n-1}}{8}  & \frac{3h_{n-1}}{8}
       \end{bmatrix},\quad M_q = \begin{bmatrix}
           \frac{3h_1}{8} & \frac{h_1}{8}	  &  	&   	&  \\
				             &  \frac{h_1+h_2}{2}            & 	        &     	& \\
	                                     &  	     &  \frac{h_2+h_3}{2}	&       &  \\
	                                     &  	     &          & \ddots     &   \\
	                                     &		     &		&  	& \frac{h_{n-2}+h_{n-1}}{2}
			       \end{bmatrix},
\]
and
\[
 K_{pq} = -\frac{c}{2}\begin{bmatrix}
             -\frac{1}{a_1} &  \frac{1}{a_1}-\frac{1}{a_2} & \frac{1}{a_2}                       & 	                   &                                   & \\
			     & -\frac{1}{a_2}                & \frac{1}{a_2}-\frac{1}{a_3}	     &  \frac{1}{a_3}     &        & \\
	       &    & \ddots & \ddots & \ddots &  \\
	       &    &        & -\frac{1}{a_{n-3}}     & \frac{1}{a_{n-3}}-\frac{1}{a_{n-2}}      & \frac{1}{a_{n-2}} \\
	       &    &        &        &  -\frac{1}{a_{n-2}}    & \frac{1}{a_{n-2}}-\frac{1}{a_{n-1}} \\
	       &    &        &        &        & -\frac{1}{a_{n-1}}
            \end{bmatrix},
\]
is an upper-triangular matrix with 3 diagonals and
\[
 K_{qp} = -\frac{1}{2}\begin{bmatrix}
             a_1      &           &          &          &        & \\
	     a_1-a_2  &  a_2      &          &          &        & \\
	     -a_2     &  a_2-a_3  & a_3      &          &        &  \\
	              & -a_3      & a_3-a_4  & a_4      &        &  \\
	              &           &   \ddots &  \ddots  & \ddots &  \\
	              &           &          & -a_{n-2} & a_{n-2}-a_{n-1} & a_{n-1}
          \end{bmatrix},
\]
is a lower-triangular matrix with 3 diagonals. Meanwhile,
\begin{equation}
 B_q = -\frac{c}{2}\begin{bmatrix}
		    0 \\
		    \vdots \\
		    0 \\
		    \frac{1}{a_{n-1}}\\
		    \frac{1}{a_{n-1}}
         \end{bmatrix},\quad
         B_p = \frac{1}{2}\begin{bmatrix}
                           a_1 \\
                           a_1 \\
                           0 \\
                           \vdots \\
                           0
                          \end{bmatrix},\quad
                          g(p_s,p, q) = -\frac{c}{4}\begin{bmatrix}
                                                 \frac{h_1\lambda_1}{a_1d_1}\frac{q_1|q_1|}{p_s} \\
                                                 (\frac{h_1\lambda_1}{a_1d_1} + \frac{h_2\lambda_2}{a_2d_2})\frac{q_2|q_2|}{p_2} \\
                                                 (\frac{h_2\lambda_2}{a_2d_2} + \frac{h_3\lambda_3}{a_3d_3})\frac{q_3|q_3|}{p_3} \\
                                                 \vdots \\
                                                 (\frac{h_{n-2}\lambda_{n-2}}{a_{n-2}d_{n-2}} + \frac{h_{n-1}\lambda_{n-1}}{a_{n-1}d_{n-1}})\frac{q_{n-1}|q_{n-1}|}{p_{n-1}}
                                                \end{bmatrix}.\label{eq:gfunction}
\end{equation}
The vectors
\[
 p =\begin{bmatrix}
     p_2 & p_3 & \cdots & p_n
    \end{bmatrix}^T,\quad  
    q = \begin{bmatrix}
         q_1 & q_2 & \cdots & q_{n-1}
        \end{bmatrix}^T,
\]
represent the discretized analog of $p$ and $q$ to be computed, while $p_1=p_s$, and $q_n=q_d$.

The discretized model for the dynamics of gas transported along pipes shown in~\eqref{eqn:dis_sys} is a nonlinear ordinary differential equation (ODE). The nonlinear term in this ODE comes from the discretization of the friction term in the momentum equation~\eqref{eqn:momentum_5}. 

\begin{remark}
    The obtained model in~\eqref{eqn:dis_sys} results from the discretization of the incompressible isothermal Euler equation of homogeneous elevation~\eqref{eqn:mass_5}--\eqref{eqn:momentum_5}. However, we note that for the heterogeneous elevation case, the gravity term in~\eqref{eqn:momentum_3} is linear in the pressure $p$. After discretization, this term will introduces an additional term in the position of $K_{qp}$ in~\eqref{eqn:dis_sys}. This new term does not change the structure of the
    model that we obtained in~\eqref{eqn:dis_sys}. The structure we refer to here is the block structure of the matrices involved in describing the equation as well as the sparsity pattern of all Jacobians of the nonlinear functions.
\end{remark}

Note that we impose the boundary condition~\eqref{eqn:bcs} by using the prescribed pressure at the inflow point and the given mass flow at the outflow point. However one can also impose a negative mass flow at the outflow point making it an inflow point, and for more complex topology of the network with more than one supply node, it can happen that the gas flows out at a so called inflow point.  Computational results in the numerical experiment show this.

\section{Network Modeling}\label{sec_net}
Within this paper, we focus on passive networks to demonstrate how advanced numerical linear algebra can benefit the fast simulation of such a network. When we say passive network we mean a network that  does not contain active elements, such as compressors, valves, etc.~\cite{GonHH17}. This simplification allows us to maintain a differential algebraic model without combinatorial aspects. For the modeling of the network with compressors and valves we refer
to~\cite{BenGHetal17,Her07}. In this section, we introduce a new scheme for the modeling of the passive networks.


The abstract gas network is described by a directed graph
\begin{equation}\label{eqn:dir_graph}
    \mathcal{G} = (\mathcal{E},\ \mathcal{N}),
\end{equation}
where $\mathcal{E}$ denotes the set of edges, which contains the pipes in the gas network. $\mathcal{N}$ represents the set of nodes, which consist of the set of supply nodes $\mathcal{N}_s$, demand nodes $\mathcal{N}_d$, and interior nodes $\mathcal{N}_0$ of the network. Here, the supply nodes represent the set of nodes in the network where gas is injected into the network or more precisely where the pressure is given, and the demand nodes form a set of nodes where the gas is extracted, where an outgoing flux is described and interior nodes are the rest. We assume from now on that demand nodes and supply nodes are the only boundary nodes. That means they are only connected to one pipe. If supply or demand nodes exist that are connected to more than one edge, we add a short pipe to that node and declare the new node as the demand or supply node and the old one becomes an interior node. Sometimes interior nodes are called junction nodes~\cite{GruHJetal14}, but for us junction nodes are more specific.

\begin{definition}\label{def:junction}
The nodes inside a graph $\mathcal{G}$, which connect at least three edges, are called junction nodes. 
\end{definition}

Denote $N_j\subset \mathcal{N}$ the set of junction nodes of a given graph $\mathcal{G}$. Since in our graph $\mathcal{N}_s\cup\mathcal{N}_d$ is equal to the set of boundary nodes, we have $\mathcal{N}_j\subset\mathcal{N}_0$. From now on we will assume that besides our original graph $\mathcal{G}$, we also have the graph $\tilde{\mathcal{G}}$, which is the graph created from $\mathcal{G}$ by smoothing out the vertices $\mathcal{N}_0\backslash\mathcal{N}_j$. Smoothing the vertex $w$, which is
connected to the edges $e_1$ and $e_2$, is the operation which removes $w$ as well as both edges and adds a new edge to the starting and end points of the pair. This edge can be given the direction of any of the two removed edges. Here, it is emphasized that only vertices that connect exactly two edges can be smoothed. This is however the case for the vertices in $\mathcal{N}_0\backslash\mathcal{N}_j$. This means our new graph $\tilde{\mathcal{G}}= (\tilde{\mathcal{E}},\ \tilde{\mathcal{N}})$
is a directed graph, which only has demand nodes, supply nodes and junction nodes in the sense of Definition~\ref{def:junction}.
\begin{figure}[H]
\centering
    \includegraphics[width=0.25\textwidth]{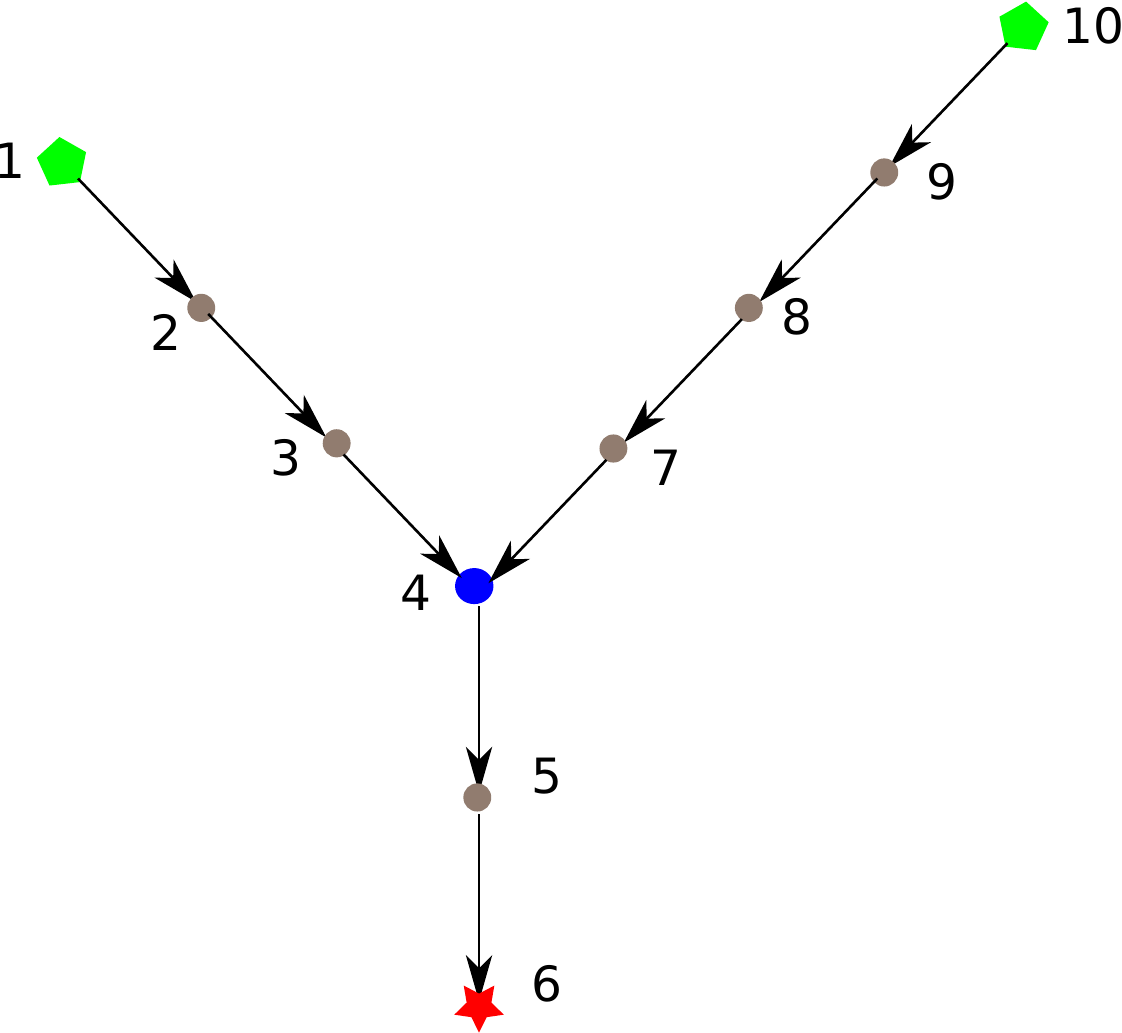}
    \caption{A typical gas network}\label{fig:forked}
\end{figure}

An example network is shown in Figure~\ref{fig:forked}. Here, nodes $1$ and $10$ denote the supply nodes, node $6$ represents the demand node. According to Definition~\ref{def:junction}, only node $4$ in Figure~\ref{fig:forked} is a 
junction node. This means we can replace this graph by the smoothed graph given in Figure~\ref{fig:junction}

\begin{figure}[H]
\centering
    \includegraphics[width=0.08\textwidth]{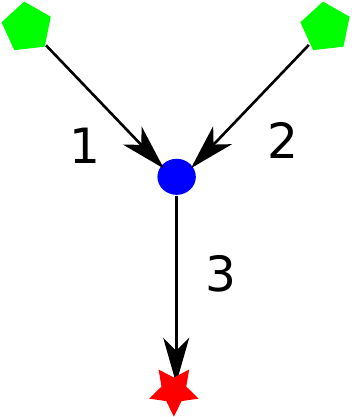}
    \caption{Smoothed network of Figure~\ref{fig:forked} with an ordering of the pipes.}\label{fig:junction}
\end{figure}

{By making use of the smoothed graph, the number of algebraic constraints is kept small, while the edges which represent several pipes become long. More details on the number of algebraic constraints will be presented in Proposition~\ref{prop:alg} at the end of this section. For each such long pipe $i\in\tilde{\mathcal{E}}$, we have the set of variables $p^{(i)}_1,\dots,p^{(i)}_{n^{(i)}}$ and $q^{(i)}_1,\dots,q^{(i)}_{n^{(i)}}$ that represent the discrete analog of $p$
and $q$, respectively. Here $n^{(i)}$ is the number of discretization points in a certain edge. The smoothed out nodes of the original graph are merely discretization points. We will denote $p^{(i)}_1=p^{(i)}_s$ and $q^{(i)}_{n^{(i)}}=q^{(i)}_d$, for now, as they are boundary conditions in a one pipe system.}

\begin{remark}\label{rmk:graph}Our gas network is modeled by a directed graph $\mathcal{G} = (\mathcal{E},\ \mathcal{N})$, where the set of boundary nodes is equal to the union of supply nodes and demand nodes. All edges $\mathcal{E}$ are pipes, and an edge attached to a supply node is called a supply pipe, while an edge attached to a demand node is called a demand pipe. A supply pipe is directed away from the supply node and a demand pipe is directed towards the demand node. By smoothing
of this graph as explained above, we also get a directed graph $\tilde{\mathcal{G}}= (\tilde{\mathcal{E}},\ \tilde{\mathcal{N}})$, whose interior nodes are all junction nodes in the sense of Definition~\ref{def:junction}. In $\tilde{\mathcal{G}}$, supply edges and demand edges still exist, and they should be directed as above, but are possibly longer. \end{remark}

\subsection{Nodal Conditions}\label{sub_sec:nodal}
There are two types of constraints concerning the connection of edges, namely the pressure constraint, and the mass flow constraint. These two types of constraints represent the equality of the dynamic pressure and conservation of mass at junction nodes~\cite{Her08}.

The so-called pressure nodal condition describes the pressure equality among pipes connected at a same junction node, which is given by
\begin{equation}\label{eqn:p_nodal}
p^{(i)}_{n^{(i)}} = p^{(j)}_s, \mbox{if a node connects the incoming pipe $i$ and the outgoing pipe $j$.}
\end{equation}
The pressure nodal condition states that the pressure at the end of the outflow pipes should equal the pressure at the beginning of the inflow pipes that connect to the same junction node and ensures that the there is only one pressure value at each node.

The second type of nodal condition i.e., the mass flow nodal condition, states the conservation of mass flow at the junction nodes, and it is given by
\begin{equation}\label{eqn:q_nodal}
\sum_{i\in \mathcal{I}_k} q_d^{(i)} = \sum_{i\in\mathcal{O}_k} q_1^{(i)} \mbox{ for every node $k$,}
\end{equation}
where $\mathcal{I}_k$ is the set of edges incoming the node $k$ and $\mathcal{O}_k$ the set of edges outgoing of node $k$. Equation~\eqref{eqn:q_nodal} states that the inflow at the junction node $k$ should equal to the outflow at the same junction node $k$. 

\subsection{Network Assembly}

In the discretized model~\eqref{eqn:dis_sys} describing the dynamics of gas transported along one single pipe the variables $p_1=p_s$ and $q_n=p_d$ are given by the boundary condition, i.e., the prescribed pressure at the input node, and the prescribed mass flow at the demand node. For the network all variables including $p_s^{(i)}$ and $q_d^{(i)}$ are treated as variables and we add the algebraic constraints ~\eqref{eqn:p_nodal}--\eqref{eqn:q_nodal} to the system. We take the network in Figure~\ref{fig:forked}, whose smoothed graph, with edge ordering is given by Figure~\ref{fig:junction} as an example to build the full DAE system. For each pipe we have the pipe dynamics of the discretized system given by \eqref{eqn:dis_sys}. The supply pressure for pipe 1 and 2 is given as well as the demand flux for pipe 3. This means we have the extra variables $p^{(3)}_s,q^{(1)}_d,q^{(2)}_d$. We will first use the fact that $p^{(3)}_s$ is equal to $p^{(1)}_{n^{(1)}}$ and replace it in the equation. We will then still have to make sure that $p^{(1)}_{n^{(1)}}=p^{(2)}_{n^{(2)}}$ and also that the incoming flux at the junction is equal to the outgoing flux. To summarize  $q^{(1)}_d,q^{(2)}_d$ are the added variables as $p_s^{(3)}$ was replaced directly and
 \begin{equation}
 \begin{split}
 q_d^{(1)}+q_d^{(2)}=q_1^{(3)},\\
p^{(1)}_{n^1}=p^{(2)}_{n^2},
 \end{split}
 \end{equation}
are the added algebraic constraints.
 
 By using the single pipe model~\eqref{eqn:dis_sys}, we obtain the mathematical model for the network in Figure~\ref{fig:forked} and \ref{fig:junction},
\begin{equation}\label{eqn:exm_daes}
    \begin{split}
        \underbrace{\begin{bmatrix}
               \mathcal{M}^{(1)}    &  & & & \\
                & \mathcal{M}^{(2)} &  & & \\
                &               & \mathcal{M}^{(3)} & & \\
                &   &   & 0 & \\
                &&      &   & 0
        \end{bmatrix}}_{\mathbf{M}}
       \frac{\partial}{\partial t}
       \begin{bmatrix}
       u^{(1)} \\
       u^{(2)} \\
       u^{(3)} \\
       q_d^{(1)} \\
       q_d^{(2)}
       \end{bmatrix}
       & =
        \underbrace{\begin{bmatrix}
       \mathcal{K}^{(1)}          &  & & \mathcal{B}_q^{(1)} & \\
       & \mathcal{K}^{(2)}        &  & & \mathcal{B}_q^{(2)}\\
       \bar{\mathcal{B}}_p^{(3)}  &  & \mathcal{K}^{(3)} & & \\
       &                      & e_3 & 1 & 1 \\
       e_1                    &e_2  &      &   & 0
        \end{bmatrix}}_{\mathbf{K}}
       \begin{bmatrix}                           
       u^{(1)} \\
       u^{(2)} \\
       u^{(3)} \\
     q_d^{(1)} \\
       q_d^{(2)}
  \end{bmatrix}\\
        & + \underbrace{\begin{bmatrix}
       \mathcal{B}_p^{(1)} & \\
       & \mathcal{B}_p^{(2)} \\
       &\\
       &\\
       & 
        \end{bmatrix}}_{\mathbf{B}_p}
       \begin{bmatrix}
       p_s^{(1)} \\
       p_s^{(2)}
       \end{bmatrix} 
        + \underbrace{\begin{bmatrix}
       0\\
       0\\
       B_q^{(3)} \\
       0\\
       0
        \end{bmatrix}}_{\mathbf{B}_q}q_d^{(3)}
        +\underbrace{\begin{bmatrix}
            \mathcal{G}_1(u^{(1)}, p_s^{(1)}) \\
            \mathcal{G}_2(u^{(2)}, p_s^{(2)}) \\
            \mathcal{G}_3(u^{(3)},\bar{e}_3u^{(1)} ) \\
            0 \\
            0
        \end{bmatrix}}_{\mathbf{g}(*)}.
       \end{split}
       \end{equation}
Here $u^{(i)}=[p^{(i)},q^{(i)}]$ and 
\begin{equation}\mathcal{G}_i (u^{(i)}, p_s^{(i)})
=
\begin{bmatrix}
 0 \\
 g(p^{(i)}_s,p^{(i)}, q^{(i)})
\end{bmatrix},
 \mbox{ and } \bar{\mathcal{B}}_p^3=\begin{bmatrix}
0, & 0, \ldots, &, 1, & 0, & \ldots, & \end{bmatrix}\otimes \mathcal{B}_p^3.
\end{equation} 
The mass flow nodal condition is represented by the 4th block row in~\eqref{eqn:exm_daes}, and the pressure nodal condition is given by the 5th block row and also the (3, 1) block of $\mathbf{K}$ in~\eqref{eqn:exm_daes}. The row vectors $e_1$, $e_2$, and $e_3$ are just elementary vectors with 1 or $-1$ on a certain position and zeros elsewhere, which select the corresponding variables for the nodal
conditions~\eqref{eqn:p_nodal}--\eqref{eqn:q_nodal}.

Note that the matrix $\mathbf{K}$ in~\eqref{eqn:exm_daes} is not uniquely defined. This is because we use $p_s^{(3)}=p_{n^{(1)}}^{(1)}$. We can also employ $p_s^{(3)}=p_{n^{(2)}}^{(2)}$, and this in turn moves $\bar{\mathcal{B}}_p^{(3)}$ from the (3, 1) block to the (3, 2) block of $\mathbf{K}$. 

Although we need extra variables for both, the pressure and mass flow, to assemble a global network model, we only introduce extra mass flow variables explicitly while the extra pressure variables can be obtained via applying some pressure nodal conditions directly. This reduces the redundancy in the network modeling. 

There is a degenerate case that a network has only one {long pipe}, i.e., this network has one supply node and one demand node, but no junction node. For such a degenerate network, which is equivalent to one pipe, we do not need to introduce extra variables since we already have the left and right boundary conditions. For a non-degenerate network, we have the following proposition.

\begin{proposition}\label{prop:alg}
Suppose that the network $\tilde{\mathcal{G}} = (\tilde{\mathcal{E}},\tilde{ \mathcal{N}})$ is a connected graph as in Remark \ref{rmk:graph}, and has $n_s$ supply pipes, $n_d$ demand pipes, and $n_j$ junction pipes. Here junction pipes are edges of the smoothed graph that are not supply pipes or demand pipes. Then the following relation between the number of extra variables $n_e$ and the number of extra algebraic constraints $n_a$ of the mathematical model holds:
    \[
       n_e= n_a = n_s + n_j.
    \]
\end{proposition}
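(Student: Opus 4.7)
The plan is to prove both equalities in $n_e = n_a = n_s + n_j$ by a careful enumeration of the variables and algebraic equations arising when the single-pipe model~\eqref{eqn:dis_sys} is assembled into the network model. For each pipe I would classify its two endpoints by the type (supply, demand, junction) of node they meet, and then reorganise the count by pipe type.

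\textbf{Variables.} By the modelling convention stated just before the proposition, the only variables added explicitly are right-end mass flows $q_d^{(i)}$, since every junction-side left-end pressure $p_s^{(j)}$ is immediately eliminated by the pressure nodal condition~\eqref{eqn:p_nodal}. For a demand pipe, $q_d^{(i)}$ is boundary data; for a supply or junction pipe its right end lies at a junction node, so $q_d^{(i)}$ becomes an unknown. Summing over pipe types yields $n_e = n_s + n_j$.

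\textbf{Algebraic constraints.} Fix a junction node $k$ with $d_{i,k}$ incoming and $d_{o,k}$ outgoing pipes. The nodal conditions~\eqref{eqn:p_nodal}--\eqref{eqn:q_nodal} yield $d_{i,k}+d_{o,k}-1$ independent pressure equalities plus one mass-flow conservation relation. Each of the $d_{o,k}$ outgoing-pipe variables $p_s^{(j)}$ is eliminated using one of those pressure equalities, leaving $d_{i,k}-1$ pressure relations and one mass-flow relation, i.e.\ $d_{i,k}$ explicit algebraic equations at $k$. Summing over all junction nodes,
\[
n_a \;=\; \sum_{k\in\mathcal{N}_j} d_{i,k},
\]
and this sum counts every pipe by whether its sink lies in $\mathcal{N}_j$: the sinks of supply and junction pipes are junction nodes (contributing $1$ each), while the sinks of demand pipes are demand nodes (contributing $0$), so the sum equals $n_s + n_j$.

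The delicate step is the local substitution at a junction node with $d_{i,k}=0$ (all outgoing pipes), where no incoming pipe supplies a reference pressure and one $p_s^{(j)}$ must be retained rather than eliminated. I would handle this by noting that retaining one such variable shifts one unit from ``substituted'' to ``retained'' on the variable side while symmetrically removing one pressure equality from the algebraic count, so the identity $n_e = n_a$ is preserved. A cleaner alternative is to invoke the orientation convention of Remark~\ref{rmk:graph} together with connectedness to argue that every junction node admits at least one incoming pipe, in which case the generic bookkeeping applies uniformly.
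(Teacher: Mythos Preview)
Your argument follows the same structure as the paper's: count the extra mass-flow unknowns pipe by pipe to get $n_e=n_s+n_j$, then count algebraic constraints junction by junction as $d_{i,k}$ each, and finally identify $\sum_k d_{i,k}$ with the number of pipes whose terminal vertex is a junction node, namely $n_s+n_j$. The only cosmetic difference is that you first write the full count $d_{i,k}+d_{o,k}-1$ of pressure equalities at node $k$ and then subtract the $d_{o,k}$ used for substitution, whereas the paper jumps directly to the $d_{i,k}-1$ remaining pressure constraints; the content is identical.

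Your closing paragraph on nodes with $d_{i,k}=0$ is a point the paper does not raise. The paper's proof tacitly assumes every junction node has at least one incoming pipe (otherwise the substitution of all outgoing $p_s^{(j)}$ is impossible and the exact value $n_s+n_j$ would shift). Your option (b), appealing to connectedness and the orientation convention of Remark~\ref{rmk:graph}, is the assumption the paper is implicitly working under; your option (a) correctly salvages the equality $n_e=n_a$ even when that assumption fails, though not the specific value $n_s+n_j$. This extra care is a genuine addition over the paper's argument, not a deviation from it.
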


\begin{proof}
As stated before, we only introduce extra variables for the mass flow of each supply and junction pipe, since the extra pressure variables are directly included at the process of network assembling. Then we have
    \[
        n_e = n_s + n_j,
        \] 
which is due to the fact that the mass flows at the demand pipes are already prescribed. 

The algebraic constraints are obtained via applying nodal conditions at the junction nodes. Suppose that the junction node $i$
has $n_{\text{in}}^{(i)}$ injection pipes, and $n_{\text{out}}^{(i)}$ outflow pipes, then we need $(n_{\text{in}}^{(i)}-1)$ equality constraints to apply the pressure nodal conditions for injection pipes since the pressure nodal conditions for outflow pipes are directly applied at the network assembling. We have one algebraic constraint to prescribe the mass flow nodal condition for junction node $i$. Therefore, we need $n_{\text{in}}^{(i)}$ algebraic constraints for junction node $i$. The sum 
over all the junction nodes of the network gives the overall number of algebraic constraints:
    \[
        n_a = \sum_i n_{\text{in}}^{(i)}.
    \]
On the other hand,
    \[
        \sum_i n_{\text{in}}^{(i)} = n_s + n_j,
        \]
as incoming pipes are never demand pipes and the sum over all incoming pipes is the number of all supply and all junction pipes.
 
\end{proof}

Proposition~\ref{prop:alg} states that the total number of extra variables is equal to the total number of algebraic constraints. This is very important for us to simulate the network model in the form of~\eqref{eqn:exm_daes}, which will be shown in the next section.

\section{Fast Numerical Methods for Simulation}\label{sec_numerical}
With the help of our modeling we can represent the gas network as in~\eqref{eqn:exm_daes}. In general the number of algebraic equation will be much smaller than the number of differential equation in this description.  In this section, we will introduce fast numerical algorithms for the simulation of such a model. 

\subsection{Numerical Algorithms to Solve Differential Algebraic Equations}
Here, we reuse the notation from~\eqref{eqn:exm_daes} with simplifications. We obtain the general mathematical model,
\begin{equation}\label{eqn:model_daes}
    M\partial_t x = K x + Bu(t) + f(x, u(t)),
\end{equation}
where the mass matrix $M$ is singular when there is at least one junction node, and the right hand side function $f$ is nonlinear. In general, the mathematical model~\eqref{eqn:model_daes} is a large system of nonlinear DAE, where the size of the DAE~\eqref{eqn:model_daes} is proportional to the length of the overall pipes in the network. To solve/simulate such a DAE model is challenging and possibly slow. Related work either
focuses on exploiting the DAE structure such that the differential part and the algebraic part are decoupled, and one can solve these two parts
separately~\cite{GruHR16}, or reducing the so-called tractability index~\cite{GruHJetal14}. Here, we propose a fast numerical method by directly tracking the expensive numerical linear algebra.
To simulate the DAE model~\eqref{eqn:model_daes}, we discretize in time using the implicit Euler method, and at time step $k$, we have
\[
    M\frac{x^k - x^{k-1}}{\tau} = Kx^k + Bu^k + f(x^k, u^k),
\]
i.e., we need to solve the following nonlinear system of equations,
\begin{equation}\label{eqn:k_step}
    F(x)=(M-\tau K)x - \tau f(x, u^k) - M x^{k-1} - \tau Bu^k = 0,
\end{equation}
at each time step $k$ to compute the solution $x^k$. To solve such a nonlinear equation for the simulation of gas networks, some related work~\cite{GruJ15, Ascher1995} treats the nonlinear term $f(x, u^k)$ explicitly, i.e., $f(x^k, u^k)\approx f(x^{k-1}, u^k)$ since the input $u^k$ is known. This explicit approximation of the nonlinear term reduces the computational complexity, and yields a linear system. However, this approximation can lead to the necessity for smaller and smaller time steps. Here, we treat this nonlinear term implicitly and apply Newton's method to solve the nonlinear system~\eqref{eqn:k_step} to study the nonlinear dynamics of the network. Newton's method is described by Algorithm~\ref{alg:newton}.

\begin{algorithm}
    \caption{Newton's method to solve~\eqref{eqn:k_step}}\label{alg:newton}
            \begin{algorithmic}[1]
                \STATE {\textbf{Input:} maximal number of Newton steps $n_{\max}$, stop tolerance $\varepsilon_0$, initial guess $x_0$}
                \STATE $m=0$
                \WHILE {$m\leq n_{\max}\ \& \ \|F(x)\|\geq \varepsilon_0$}
                \STATE Compute the Jacobian matrix $D_F(x_m)=\frac{\partial}{\partial x}F|_{x=x_m}$ 
                \STATE Solve $F(x_m) + D_F(x_m)(x-x_m)=0$
                \STATE $m\gets m+1$, $x_m\gets x$
                \ENDWHILE
                \STATE {\textbf{Output:} solution $x\approx x_m$}
            \end{algorithmic}
\end{algorithm}

The biggest challenge for Algorithm~\ref{alg:newton} is to solve the linear system in line $5$ at each Newton iteration, since the Jacobian matrix $D_F(x_m)$ is large. Krylov subspace methods such as the generalized minimal residual (GMRES) method~\cite{Saad2003} or induced dimension reduction (IDR(s)) method~\cite{Sonneveld2008} are then appropriate to solve such a system. To accelerate the convergence of such a Krylov subspace method, we need to apply a preconditioning technique by exploiting the
structure of the Jacobian matrix $D_F(x)$.


\subsection{Matrix Structure}
The Jacobian matrix 
\begin{equation}\label{eqn:jacobian}
    D_F(x) = (M-\tau K) + \tau \frac{\partial}{\partial x}f(x, u),
\end{equation}
where the matrices $M$ and $K$ are two-by-two block matrices, and
\begin{equation}\label{eqn:m_and_k}
    M = \begin{bmatrix}
        \bar{M} & \\
                & 0
    \end{bmatrix},\quad K = \begin{bmatrix}
        K_{11} & K_{12} \\
        K_{21} & K_{22}
    \end{bmatrix}.
\end{equation}
Here $\bar{M}$ is block-diagonal, and the second block row of $A$ comes from the algebraic constraints of the networks by applying the nodal conditions introduced in Section~\ref{sec_net}. The size of $A_{11}$ is much bigger than the size of $A_{22}$ since $A_{11}$ comes from the discretization of the isothermal Euler equations over all the edges of the smoothed network, while the size of $A_{22}$ is equal to $n_s + n_j$ according to Proposition~\ref{prop:alg}. Moreover, the partial derivative of the nonlinear term $\frac{\partial}{\partial x}f(x, u)$ has the following structure
\begin{equation}\label{eqn:df}
    \frac{\partial}{\partial x}f = \begin{bmatrix}
        D_f & 0\\
        0 & 0
    \end{bmatrix},
\end{equation}
since the nonlinear term only acts on the differential part of the DAE~\eqref{eqn:exm_daes}. If the graph $\tilde{\mathcal{G}}$ is directed in a certain way and ordered a certain way, we can show that the first block of the Jacobian matrix is block lower triangular.

\begin{lemma}Given a graph with directed and undirected edges, such that the directed edges do not create cycles, we can always direct the undirected edges such that the resulting graph is a directed acyclic graph (DAG).
\end{lemma}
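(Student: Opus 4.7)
The plan is to reduce the claim to the existence of a topological ordering on the vertex set that is compatible with the already-directed edges, and then to use that ordering to orient the undirected edges in a way that forbids cycles.

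First I would consider the spanning subgraph $\mathcal{H}$ consisting of all vertices of the given graph together with only its directed edges. By hypothesis, $\mathcal{H}$ contains no directed cycle, so $\mathcal{H}$ is a DAG. It is a classical fact that every finite DAG admits a topological ordering, i.e.\ a bijection $\sigma\colon \mathcal{N}\to\{1,2,\ldots,|\mathcal{N}|\}$ such that for every directed edge $(u,v)$ of $\mathcal{H}$ we have $\sigma(u)<\sigma(v)$. (This can itself be proven by iteratively extracting a source vertex, i.e.\ a vertex with in-degree zero in the remaining subgraph, which must exist in any nonempty finite DAG.)

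Next, I would orient each undirected edge $\{u,v\}$ by declaring it to go from the endpoint with smaller $\sigma$-value to the endpoint with larger $\sigma$-value; since $\sigma$ is injective, this is unambiguous. Let $\mathcal{G}'$ denote the resulting fully directed graph. By construction, \emph{every} edge $(u,v)$ of $\mathcal{G}'$, whether originally directed or newly oriented, satisfies $\sigma(u)<\sigma(v)$.

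Finally I would argue that $\mathcal{G}'$ is acyclic. Suppose for contradiction that $\mathcal{G}'$ contained a directed cycle $v_0\to v_1\to\cdots\to v_k=v_0$. Applying $\sigma$ along the cycle gives $\sigma(v_0)<\sigma(v_1)<\cdots<\sigma(v_k)=\sigma(v_0)$, a contradiction. Hence $\mathcal{G}'$ is a DAG, completing the proof. There is no real obstacle here; the only subtlety is ensuring that the topological order used is a total order (so that ties among vertices incomparable under the partial order induced by $\mathcal{H}$ are broken in some fixed way), which is precisely what the topological-sort lemma provides.
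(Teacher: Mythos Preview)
Your proof is correct and follows essentially the same approach as the paper: remove the undirected edges to obtain a DAG, take a topological ordering of its vertices, and then orient each undirected edge according to that ordering. Your version is more detailed (you justify the existence of a topological ordering and explicitly verify acyclicity via the strict increase of $\sigma$ along any path), but the underlying argument is the same.
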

\begin{proof}
Take such a graph and remove all undirected edges. This results in a graph that may not be connected. However, it is a DAG so that we can order the nodes in a topological ordering~\cite{BanG08}. Applying such a topological ordering to the original graph, direct the undirected edges according to that topological ordering, which induces another DAG.
\end{proof}


This means, even by fixing the direction of the supply and demand pipes, we can redirect all the other edges in our graph $\tilde{\mathcal{G}}$ such that the resulting graph is a DAG. This is not unique, however induces unique directions in the original graph. Here, we use an example network to show how to build such a DAG.

\begin{figure}[H]
    \centering
    \subfigure[DAG with node ordering]
        {\label{fig:dag_1}
          \includegraphics[width=0.2\textwidth]{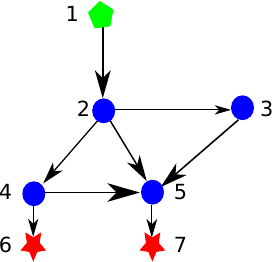}}
          \qquad  \qquad
    \subfigure[edges ordering and node ordering of DAG]
        {\label{fig:dag_2}
          \includegraphics[width=0.5\textwidth]{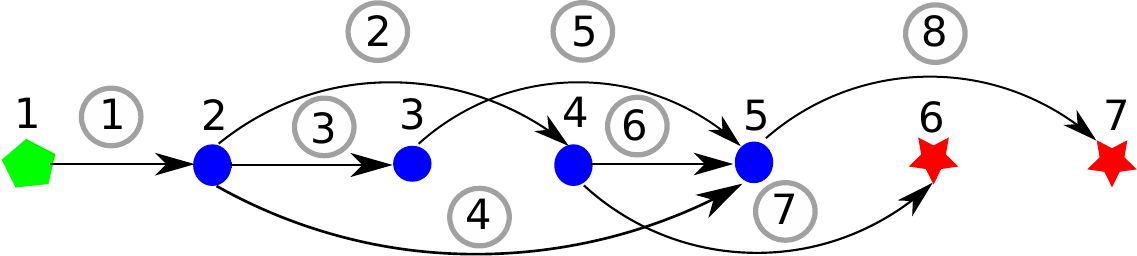}}
    \caption{An illustrative network example of a DAG}\label{fig:dag}
\end{figure}

The network in Figure~\ref{fig:dag_1} is a smoothed graph that represents a gas network. Only the direction of the supply and demand pipes are fixed. We start ordering the nodes of the graph from the supply nodes, and end up with the demand nodes, which gives a topological ordering of the nodes. Now, we can plot the graph along a line as in Figure~\ref{fig:dag_2}. The directions of the undirected edges of the graph are picked up by pointing away from lower order nodes to higher order nodes, which induces a DAG. 

\begin{lemma} Given a DAG, we can order the edges in such a way that at every node all incoming edges have a lower order as all the outgoing edges, or it doesn't have an incoming edge. We call this ordering \textbf{direction following (DF) ordering} \end{lemma}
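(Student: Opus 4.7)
The plan is to construct the DF ordering explicitly from a topological ordering of the nodes, which exists by the earlier lemma/topological sorting argument cited via \cite{BanG08}. Let $\sigma:\tilde{\mathcal{N}}\to\{1,\ldots,|\tilde{\mathcal{N}}|\}$ be a topological ordering of the vertices of the given DAG, so that $\sigma(u)<\sigma(v)$ for every directed edge $(u,v)\in\tilde{\mathcal{E}}$. I will use $\sigma$ to induce a total order on the edge set $\tilde{\mathcal{E}}$.

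First, I would order the edges lexicographically by the pair $\bigl(\sigma(v),\sigma(u)\bigr)$, where $u$ is the tail and $v$ is the head of the edge; any remaining ties (which in a simple directed graph cannot actually occur, since the pair $(u,v)$ uniquely determines the edge) can be broken arbitrarily. Call the resulting total order on $\tilde{\mathcal{E}}$ the DF ordering. Fix an arbitrary node $w\in\tilde{\mathcal{N}}$ with at least one incoming edge, and let $e_{\mathrm{in}}=(u,w)$ be any incoming edge and $e_{\mathrm{out}}=(w,z)$ be any outgoing edge. Then the first coordinates of their lexicographic keys are $\sigma(w)$ and $\sigma(z)$ respectively, and by topologicality $\sigma(z)>\sigma(w)$. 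Hence $e_{\mathrm{in}}$ precedes $e_{\mathrm{out}}$ in the DF ordering, which is the required property. Nodes without incoming edges satisfy the condition vacuously, matching the second clause of the lemma statement.

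The two steps — invoking the existence of a topological ordering and verifying the lexicographic construction — are essentially mechanical, so I do not expect a substantive obstacle. The only point that merits a careful sentence is the observation that the first coordinate of the key strictly separates incoming edges (all of which share the first coordinate $\sigma(w)$) from outgoing edges (whose first coordinates are all strictly greater than $\sigma(w)$), so that no amount of tie-breaking on the second coordinate can interleave the two classes. Everything else is standard graph-theoretic bookkeeping, and the proof is constructive, which is useful for the numerical algorithm later since an explicit DF ordering is what makes the $(1,1)$ block of the Jacobian block lower triangular.
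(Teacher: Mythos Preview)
Your proof is correct and follows essentially the same idea as the paper: take a topological ordering of the nodes and use it to sort the edges. The only cosmetic difference is that the paper keys each edge on its \emph{tail} (starting node) rather than its \emph{head}; both choices work for the same reason, namely that at any node $w$ the topological order strictly separates $\sigma(w)$ from the relevant endpoint of the other edge, so incoming and outgoing edges cannot interleave.
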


\begin{proof}
    Since we have a DAG, there exist a topological ordering of the nodes. This means if an edge goes from node $a$ to node $b$, then $b$ has a higher order than $a$. We now order the edges, by their starting node. This means an edge has a higher order if its starting node has a higher order. If two edges have the same starting node, then it doesn't matter in which order we list them, we just pick one. Once we have this ordering of the edges based on the order of the nodes we ensure that all outgoing edges at a node have a higher order as all the incoming edge of a node. 
\end{proof}

Note that the DF ordering is not unique. A DF ordering example of the network in Figure~\ref{fig:dag_1} is given by Figure~\ref{fig:dag_2}, where the index above each edge is the order of such an edge.

\begin{definition}(Smoothed direction following (SDF) gas network graph) This is a directed acyclic graph, whose boundary nodes are supply or demand nodes, directions are away from supply nodes and towards demand nodes. All edges are sorted with the DF ordering and there are no nodes in the graph that connect exactly two edges. \label{def:sdf}
\end{definition}

From now on, we assume that our modeling is such that $\tilde{\mathcal{G}}$ is a SDF gas network graph. Then we have the following proposition to illustrate the structure of the partial derivative of the nonlinear term~\eqref{eqn:df}.
\begin{proposition}\label{prop:low}
Given a SDF gas network graph we can construct the DAE system in such a way that $D_f$ in~\eqref{eqn:df} has a block lower-triangular structure. 
\end{proposition}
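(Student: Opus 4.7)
The plan is to analyze the $i$-th block-row of $D_f$ and show that it has nonzero entries only in columns indexed by pipes of equal or strictly lower DF order than $i$. Recall from~\eqref{eqn:exm_daes} that the nonlinear contribution splits into one block per edge: the $i$-th block is $\mathcal{G}_i(u^{(i)}, p_s^{(i)}) = [0,\; g(p_s^{(i)}, p^{(i)}, q^{(i)})]^T$, with $g$ given by~\eqref{eq:gfunction}. The crucial observation is that $g$ only involves the variables $p^{(i)}, q^{(i)}$ on pipe $i$ together with the single scalar $p_s^{(i)}$ at the pipe's starting node. So the only way $\mathcal{G}_i$ can depend on variables of another pipe $j \neq i$ is through $p_s^{(i)}$.

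I would then trace how $p_s^{(i)}$ is handled by the network assembly described in Section~\ref{sec_net}. Two cases arise: (a) if pipe $i$ is a supply pipe, then $p_s^{(i)}$ is an external input, contributing to $u(t)$ but not to $x$, so $\mathcal{G}_i$ depends only on $u^{(i)}$; (b) otherwise, $i$ has an upstream junction, and the pressure nodal condition~\eqref{eqn:p_nodal} is used to substitute $p_s^{(i)} = p_{n^{(j)}}^{(j)}$ for exactly one incoming pipe $j$ at that junction (any further equalities among incoming pipes at the same junction are imposed as algebraic constraints, which live in the second block-row and have no friction term). Therefore $\partial \mathcal{G}_i/\partial u^{(k)}$ can be nonzero only when $k=i$ or $k=j$, where $j$ is that chosen incoming pipe.

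Now I apply the SDF assumption (Definition~\ref{def:sdf}): the DF ordering of edges ensures that every incoming edge at any node has strictly lower index than every outgoing edge at the same node. Since $j$ is incoming and $i$ is outgoing at the common junction, this gives $j < i$. Hence $\partial \mathcal{G}_i / \partial u^{(k)} = 0$ for all $k > i$, which is exactly the statement that the differential-variable block $D_f$ of $\partial f / \partial x$ in~\eqref{eqn:df} is block lower triangular, with the diagonal block $\partial \mathcal{G}_i / \partial u^{(i)}$ and one subdiagonal block per non-supply pipe.

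The only delicate point I anticipate is being explicit that the substitution used to eliminate $p_s^{(i)}$ can consistently be chosen so that each non-supply pipe inherits its starting pressure from an incoming neighbor at its upstream junction (never an outgoing one), which is automatic here because $i$ is by construction outgoing at that junction and at least one incoming pipe exists (otherwise the junction would have only outgoing edges and, being neither supply nor demand, would fail Definition~\ref{def:junction}). Once this is set up correctly, the lower-triangular conclusion follows immediately from the DF ordering, with no further computation.
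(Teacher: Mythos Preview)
Your proposal is correct and follows essentially the same argument as the paper: identify that $\mathcal{G}_i$ depends only on $u^{(i)}$ and the inlet pressure $p_s^{(i)}$, observe that the latter is either an external input (supply pipe) or is substituted by $p_{n^{(j)}}^{(j)}$ from a chosen incoming pipe $j$, and then invoke the DF ordering to conclude $j<i$. One small quibble: your justification that an incoming pipe exists at the upstream junction appeals to Definition~\ref{def:junction}, but that definition only requires at least three incident edges and says nothing about their orientation; the correct reason is that in the DAG the only nodes without incoming edges are sources, and by the SDF setup these are precisely the supply (boundary) nodes, so any interior junction must have an incoming edge.
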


\begin{proof}
    The block of $f$ corresponding to the $i$-th pipe has the structure, 
    \[
       f_i=\mathcal{G}_i = \begin{bmatrix}
       0 \\
       g(p_{\text{in}},p^{(i)},q^{(i)}) 
       \end{bmatrix},
    \]
where the structure of $g(p_{\text{in}},p^{(i)},q^{(i)})$ is given by~\eqref{eq:gfunction}. Here
    \[
        p_{\text{in}}=\begin{cases}
            p_s, & \text{when $i$-th pipe is a supply pipe}, \\
            p_{\text{out}}^{(j)}, & \text{where pipe $j$ is a selected incoming pipe into the node where pipe $i$ is outgoing}.
        \end{cases}
    \]
    By the selected ordering, we always have $j<i$. Then the upper triangular blocks of $D_f$ are  0. The diagonal blocks are given by
\[
  \begin{bmatrix}
   0 & 0 &0\\
      0& \frac{g(p_{\text{in}},p^{(i)},q^{(i)})}{\partial p^{(i)}}  &    \frac{g(p_{\text{in}},p^{(i)},q^{(i)})}{\partial q^{(i)}} 
  \end{bmatrix},
   \]
   which is again block lower triangular and in particular, with an easy structure of the diagonal blocks, since $\frac{g(p_{\text{in}},p^{(i)},q^{(i)})}{\partial q^{(i)}}$ in our discretization is tridiagonal.
%
%
%
%
\end{proof}

Similar to Proposition~\ref{prop:low}, we can also show that the $(1, 1)$ block of $K$ in~\eqref{eqn:m_and_k} has a lower-triangular block structure.

\begin{proposition}\label{prop:a11}
  Given an SDF gas network graph, $K_{11}$ in~\eqref{eqn:m_and_k} is also block lower-triangular structure.
\end{proposition}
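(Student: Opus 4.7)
The plan is to mirror the proof of Proposition~\ref{prop:low} but focus on the linear coupling matrix rather than on the Jacobian of the friction term. I would first describe the block structure of $K_{11}$ pipe-by-pipe: index the block rows and columns by the edges of $\tilde{\mathcal{G}}$ in the DF order, so that the $(i,i)$ block is exactly the single-pipe matrix $\mathcal{K}^{(i)}$ coming from~\eqref{eqn:dis_sys}, and the only possible off-diagonal blocks are those introduced during the network assembly illustrated in~\eqref{eqn:exm_daes}, namely the $\bar{\mathcal{B}}_p^{(i)}$-type blocks that appear because $p_s^{(i)}$ was substituted directly using a pressure nodal condition. All other coupling between different pipes lives either in the $(2,2)$ block (mass-flow constraints and pressure-equality constraints) or in the off-diagonal blocks $K_{12}$ and $K_{21}$ of~\eqref{eqn:m_and_k}; I would make this bookkeeping explicit.

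Next I would verify that these $\bar{\mathcal{B}}_p^{(i)}$-type blocks only ever appear strictly below the block diagonal. For every pipe $i$ that is not a supply pipe, its starting node is a junction node, and the assembly procedure substitutes $p_s^{(i)}$ by $p_{n^{(j)}}^{(j)}$ for some incoming pipe $j$ into that junction (recall the remark after~\eqref{eqn:exm_daes} that the choice of $j$ is not unique, so we are free to pick one). This places an off-diagonal contribution in block position $(i,j)$. By Definition~\ref{def:sdf}, the DF ordering guarantees that at every node all incoming edges have strictly lower index than all outgoing edges, so $j < i$. Hence every such contribution lies strictly below the block diagonal. Supply pipes contribute no off-diagonal block of this kind, because their $p_s^{(i)}$ is a prescribed input and goes into $\mathbf{B}_p$ rather than into $K_{11}$.

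I would then combine the two observations: the diagonal blocks $\mathcal{K}^{(i)}$ are what they are (in particular each is itself a two-by-two pipe block, and since we are discussing block lower-triangularity of $K_{11}$ with respect to the pipe partition, the internal structure of $\mathcal{K}^{(i)}$ is irrelevant), and every off-diagonal block sits strictly below the diagonal. Therefore $K_{11}$ is block lower triangular in the DF ordering.

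The main obstacle is not the algebra but the bookkeeping: one must pin down exactly which substitutions generate off-diagonal contributions in $K_{11}$ and argue that no other mechanism in the assembly~\eqref{eqn:exm_daes} does so, and then tie the index inequality $j<i$ back to the DF ordering from Definition~\ref{def:sdf}. If several incoming pipes are available at a junction, I would emphasise that any of them satisfies $j<i$, so the freedom noted after~\eqref{eqn:exm_daes} is consistent with the lower-triangular conclusion. A small sanity check on the example in Figure~\ref{fig:junction}, where the $(3,1)$ block $\bar{\mathcal{B}}_p^{(3)}$ in~\eqref{eqn:exm_daes} lies strictly below the diagonal, would close the argument.
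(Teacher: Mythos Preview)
Your proposal is correct and follows essentially the same approach as the paper's proof: identify that the only off-diagonal blocks in $K_{11}$ arise from the pressure nodal substitution $p_s^{(i)}=p_{n^{(j)}}^{(j)}$ for non-supply pipes, and then invoke the DF ordering from Definition~\ref{def:sdf} to conclude $j<i$. The paper's argument is simply a terser version of yours, without the explicit bookkeeping about which couplings land in $K_{11}$ versus $K_{12}$, $K_{21}$, or the $(2,2)$ block, and without the sanity check on the example.
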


\begin{proof}
    For the $i$-th block row of $K_{11}$, the off-diagonal blocks are zero if the $i$-th pipe is a supply pipe. If the $i$-th pipe is not a supply pipe and connected with other pipes, then the off-diagonal block $(i,\ j)$ is nonzero if the $j$-th pipe corresponds to one of the flow injection pipes of the $i$-th pipe. This is because the pressure nodal condition~\eqref{eqn:p_nodal} is applied for the $i$-th pipe. According to Definition~\ref{def:sdf}, we can pick the pressure condition for the $i$-th pipe by any injecting pipe $j$, which are all of lower order and therefore ensure $i>j$, and this completes the proof.
\end{proof}

If we partition the Jacobian matrix~\eqref{eqn:jacobian} by a 2-by-2 block structure as in~\eqref{eqn:m_and_k}, then we have the following theorem to illustrate the structure of the $(1, 1)$ block of the Jacobian matrix. 
\begin{theorem}\label{thm:jacobian}
    Given an SDF gas network graph we are able to model the system in such a way, that the $(1, 1)$ block of the Jacobian matrix~\eqref{eqn:jacobian} has a block lower-triangular structure. 
\end{theorem}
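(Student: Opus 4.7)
The plan is to read off the $(1,1)$ block of the Jacobian directly from the block decompositions already established and observe that it is a sum of three matrices, each of which is block lower-triangular (or block diagonal) with respect to the same pipe-wise partition induced by the DF ordering.

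First, I would combine the expressions~\eqref{eqn:jacobian}, \eqref{eqn:m_and_k}, and \eqref{eqn:df} to obtain
\[
    D_F(x) = \begin{bmatrix} \bar{M} - \tau K_{11} + \tau D_f & -\tau K_{12} \\ -\tau K_{21} & -\tau K_{22} \end{bmatrix},
\]
so the $(1,1)$ block is $\bar{M} - \tau K_{11} + \tau D_f$. Here the natural block partition of each summand is indexed by the pipes in $\tilde{\mathcal{E}}$, listed in the DF ordering provided by Definition~\ref{def:sdf}, with each pipe $i$ contributing a diagonal block of size $2n^{(i)}-1$ coming from the discretized $p^{(i)}$ and $q^{(i)}$ variables.

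Next I would verify that all three summands respect this common block partition. The mass matrix $\bar{M}$ is block diagonal by~\eqref{eqn:m_and_k}, since the mass matrices $M_p^{(i)}, M_q^{(i)}$ of distinct pipes do not interact. Proposition~\ref{prop:a11} gives that $K_{11}$ is block lower-triangular in the DF ordering, as its off-diagonal couplings come from the pressure nodal condition~\eqref{eqn:p_nodal} applied with a selected injecting pipe $j$ of strictly lower order than $i$. Proposition~\ref{prop:low} gives the block lower-triangular structure of $D_f$, since the only cross-pipe dependence in $f$ is through $p_{\text{in}}$ which is read from a lower-ordered incoming pipe.

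Finally I would conclude by the elementary fact that the sum of block lower-triangular matrices with a common block partition is itself block lower-triangular, so $\bar{M} - \tau K_{11} + \tau D_f$ has the claimed structure. There is essentially no obstacle here beyond verifying that the three summands are written with respect to the same pipe-wise partition; this holds by construction because all three matrices arise from the per-pipe discretization~\eqref{eqn:dis_sys} assembled via the nodal conditions of Section~\ref{sub_sec:nodal}, and the DF ordering of Definition~\ref{def:sdf} fixes a single consistent ordering of these pipe blocks.
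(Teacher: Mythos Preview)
Your proposal is correct and follows essentially the same approach as the paper: identify the $(1,1)$ block as $\bar{M}-\tau K_{11}+\tau D_f$, invoke Propositions~\ref{prop:low} and~\ref{prop:a11} for the block lower-triangular structure of $D_f$ and $K_{11}$, note that $\bar{M}$ is block diagonal, and conclude. Your added remark that the three summands share the same pipe-wise block partition is a useful point of rigor that the paper leaves implicit; the stated block size $2n^{(i)}-1$ is a minor slip (the per-pipe variables are $p_2^{(i)},\dots,p_{n^{(i)}}^{(i)}$ and $q_1^{(i)},\dots,q_{n^{(i)}-1}^{(i)}$, giving $2(n^{(i)}-1)$), but this does not affect the argument.
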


\begin{proof}
    According to~\eqref{eqn:jacobian}--\eqref{eqn:df}, the $(1, 1)$ block of the Jacobian matrix $D_F(x)$ is,
    \[
        \bar{M}-\tau K_{11} + \tau D_f.
        \]
    According to Proposition~\ref{prop:low} and Proposition~\ref{prop:a11}, $K_{11}$ and $D_f$ are block lower-triangular matrices. 
    Since $\bar{M}$ is a block diagonal matrix, this completes the proof. 
\end{proof}

Next, we use a benchmark network from~\cite{Roggendorf15}, shown in Figure \ref{fig:big_net} to show the structure of the Jacobian matrix $D_F(x)$ of the first Newton iteration for the first time step, i.e., $D_F(x_{1}^1)$ before and after applying the DF ordering. The network parameters are also given in~\cite{Roggendorf15}. We set the mesh size for the FVM discretization to be $20$ meters, i.e., $h=20$. The sparsity pattern of $D_F(x_1^1)$
before and after the DF ordering are given by Figure~\ref{fig:spy_plot}.

\begin{figure}[H]
    \centering
    \includegraphics[width=0.6\textwidth]{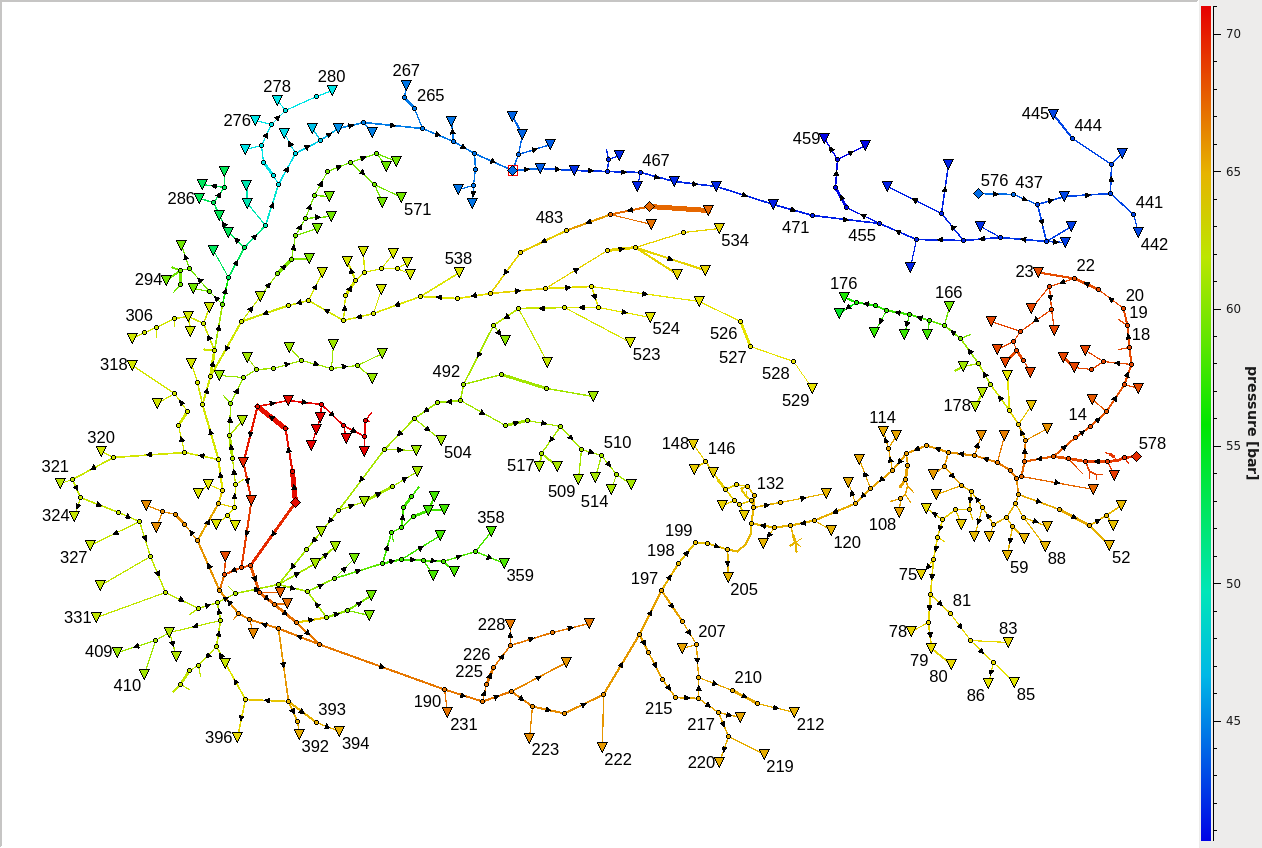}                                                                  
    \vspace{-0.2cm}
    \caption{Big benchmark network in~\cite{Roggendorf15}}\label{fig:big_net}
\end{figure}
\vspace{-0.6cm}

\begin{figure}[H]
    \centering
    \subfigure[without DF ordering]
    {\label{fig:without}
        \includegraphics[width=0.38\textwidth]{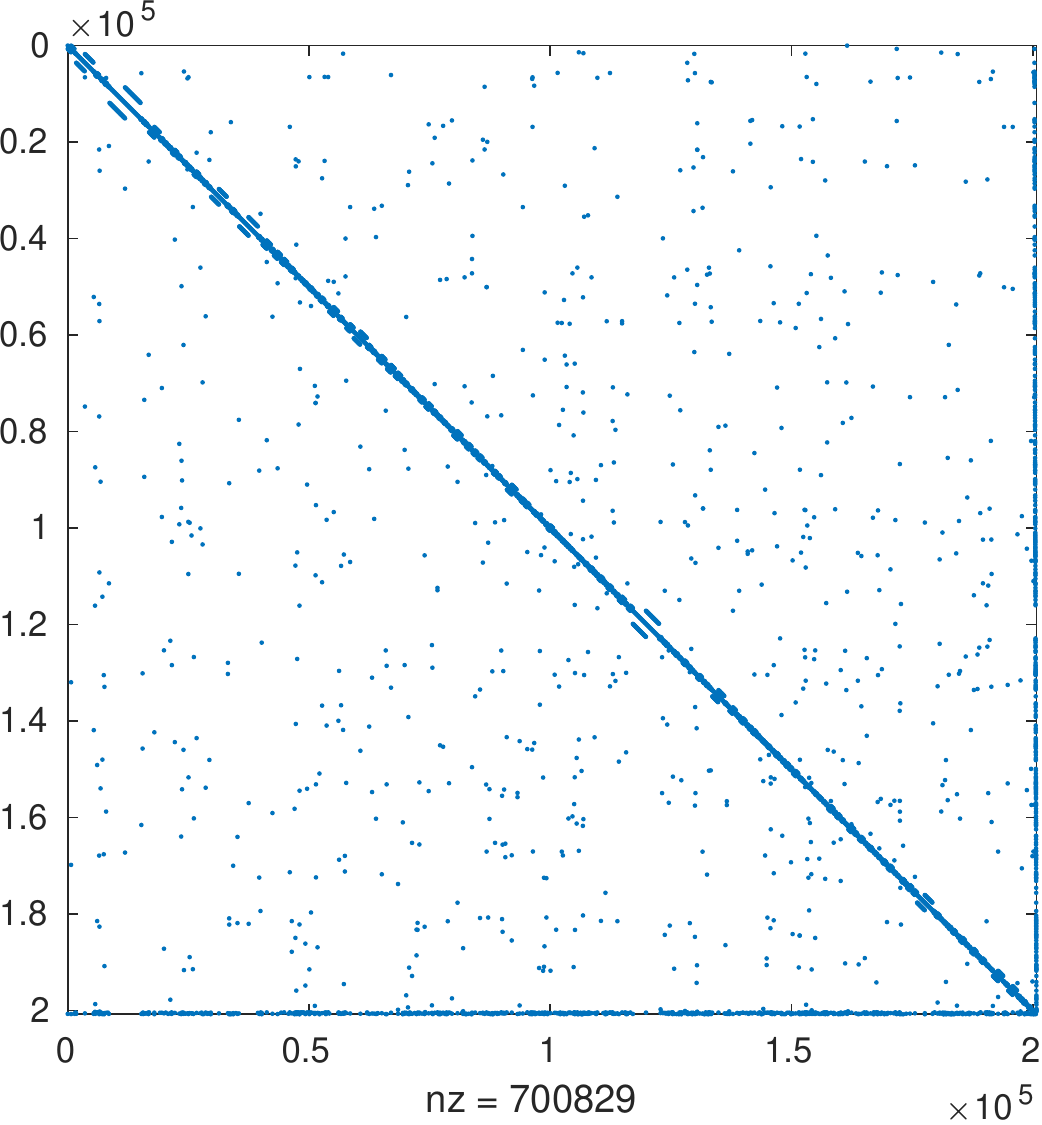}}\qquad\qquad
    \subfigure[with DF ordering]
    {\label{fig:with}
\includegraphics[width=0.38\textwidth]{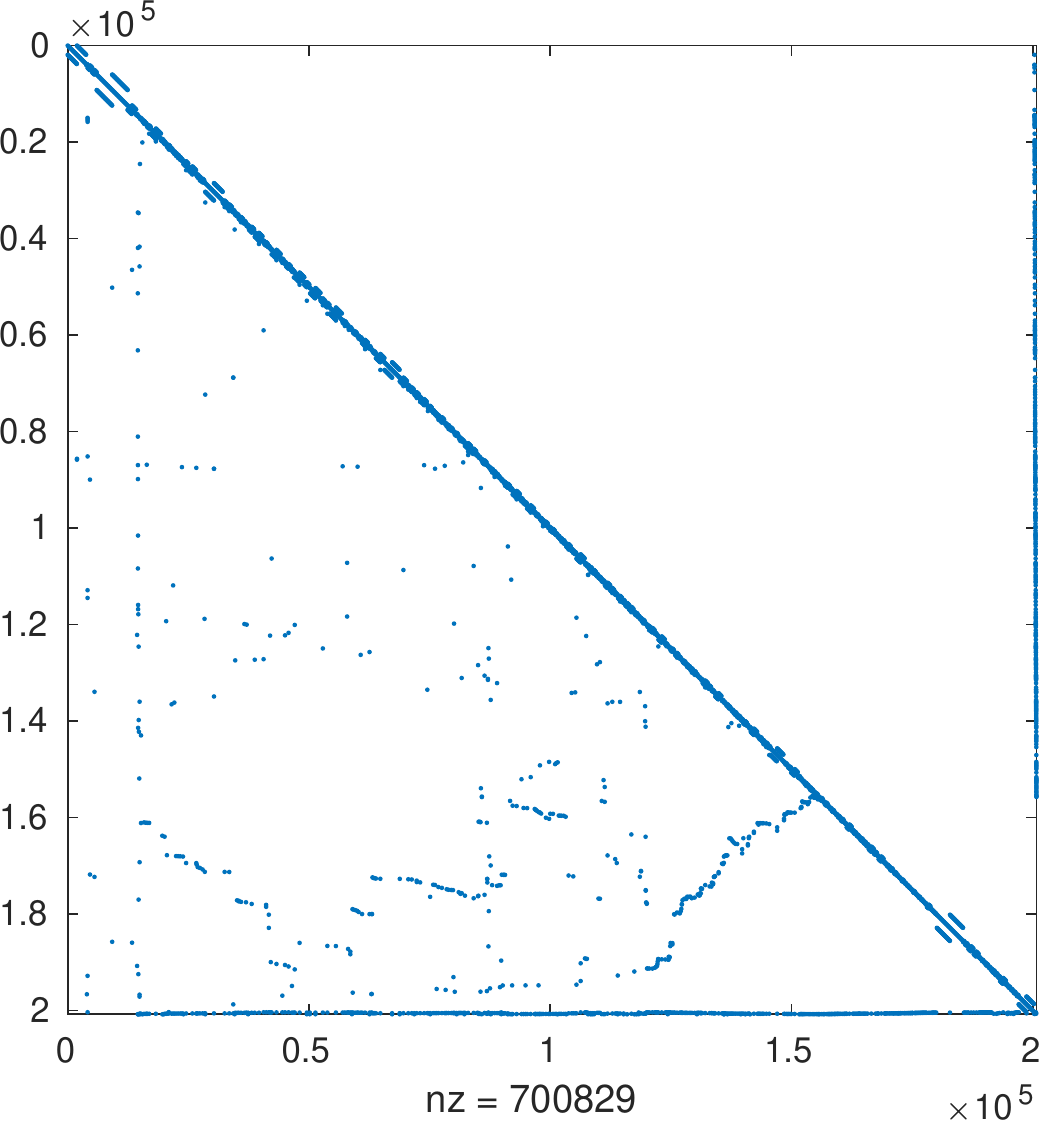}}                                                                  
    \vspace{-0.2cm}
    \caption{Sparsity pattern of $J$ without and with DF ordering}\label{fig:spy_plot}
\end{figure}
\vspace{-0.2cm}
Figure~\ref{fig:spy_plot} shows that the Jacobian matrix $D_F(x)$ is a sparse matrix. After applying the DF ordering, the $(1, 1)$ block has a block lower-triangular structure, and the size of the $(1, 1)$ block is much bigger than the $(2, 2)$ block. For the case when the mesh size is $20$ meters, the $(1, 1)$ block is a $200,348\times 200,348$ block lower-triangular matrix while the size of the $(2, 2)$ block is $417\times 417$. In general, the size
of the $(2, 2)$ block of the Jacobian matrix is fixed since it equals the number of algebraic constraints. According to Proposition~\ref{prop:alg}, it equals $n_s+n_j$, where $n_s$ is the number of supply pipes and $n_j$ is the number of junction pipes. The size of the $(1, 1)$ block depends on the mesh size and equals twice the total pipeline length divided by the mesh size. Therefore, it is much
bigger than $n_s+n_j$, if the total length of the pipelines is much larger compared to the mesh size.
\subsection{Preconditioning Technique}

The specific structure of the Jacobian matrix can be exploited to solve the Jacobian system fast for the simulation of the gas network. Recall that to simulate the discretized gas network model, we need to apply Algorithm~\ref{alg:newton}, while we need to solve a Jacobian system at each Newton iteration for each time step $k\ (k=1,\ 2,\ \cdots, n_t)$. To solve the Jacobian system, we exploit the 2-by-2 structure of the Jacobian matrix. Here we write the Jacobian
matrix $D_F(x)$ as
\[
    D_F(x) = \begin{bmatrix}
        D_{F_{11}} & D_{F_{12}} \\
        D_{F_{21}} & D_{F_{22}} 
    \end{bmatrix}.
\]
Note that the Jacobian matrix $D_F(x)$ has a special structure, which is called \textit{generalized saddle-point structure}~\cite{Benzi2005}. This enables us to make use of the preconditioning techniques designed for the generalized saddle-point systems to solve the Jacobian system. Generalized saddle-point systems come from many applications, such as computational fluid dynamics~\cite{elman2005finite}, PDE-constrained optimization~\cite{Rees2010}, optimal flow control~\cite{Qiu2015_phd}. Many efforts
have been dedicated to the efficient numerical solution of such systems using preconditioning techniques~\cite{Pearson2015, Porcelli2015, Wathen2017, Pestana2015, Stoll2015}, we recommend~\cite{Benzi2005, Wathen2015} for a general survey of preconditioning generalized saddle-point systems. 

We can compute a block LU factorization by
\begin{equation}\label{eqn:block_lu}
    D_F(x) = \begin{bmatrix}
        D_{F_{11}} & \\
        D_{F_{21}} & S
    \end{bmatrix}
    \begin{bmatrix}
        I & D_{F_{11}}^{-1}D_{F_{12}} \\
          & I
    \end{bmatrix}.
\end{equation}
Here $S=D_{F_{22}}-D_{F_{21}}D_{F_{11}}^{-1}D_{F_{12}}$ is the Schur complement of $D_{F}(x)$. According to Theorem~\ref{thm:jacobian}, $D_{F_{11}}$ has a block lower-triangular structure, and the size of $D_{F_{22}}$ is much smaller than $D_{F_{11}}$. Therefore, we can compute the Schur complement $S$ by block forward substitution, and apply the following preconditioner,
\begin{equation}\label{eqn:preconditioner}
    P = \begin{bmatrix}
        D_{F_{11}} & \\
        D_{F_{21}} & S
    \end{bmatrix},
\end{equation}
to solve the Jacobian system using a preconditioned Krylov solver. Associated with the block LU factorization~\eqref{eqn:block_lu}, we can immediately see that the preconditioned spectrum $\lambda(P^{-1}D_{F}(x))=\{1\}$. Moreover, the minimal polynomial of the preconditioned matrix $P^{-1}D_{F}(x)$ has degree 2, so that a method like generalized minimum residual (GMRES)~\cite{Saad2003} would converge in at most two steps~\cite{Benzi2005}. 

At each iteration of the Krylov solver, we need to solve the system 
\[
    \begin{bmatrix}
        D_{F_{11}} & \\
        D_{F_{21}} & S 
    \end{bmatrix}
    \begin{bmatrix}
        y_1\\
        y_2
    \end{bmatrix}
        =\begin{bmatrix}
            r_1 \\
            r_2
        \end{bmatrix},
        \]
which can be solved easily since $D_{F_{11}}$ is a block lower-triangular system, and $S=D_{F_{22}}-D_{F_{21}}D_{F_{11}}^{-1}D_{F_{12}}$ can be computed directly since the size of $S$ is much smaller than $D_{F_{11}}$. Note that at each time step $k$, we need to solve a nonlinear system using Newton's method, and we need to apply a preconditioned Krylov subspace method to solve a Jacobian system at each Newton iteration. For such a preconditioned Krylov solver, we need to compute the
Schur complement $S$ at each Newton iteration. This can still be computationally expensive for gas network simulation within a certain time horizon. We can further simplify the preconditioner by applying a fixed preconditioner $P_1$ for all Newton iterations and all time steps, i.e., we choose
\begin{equation}\label{eqn:p0}
    P_1 = \begin{bmatrix}
        D_{F_{11}}^1 & \\
        D_{F_{21}}^1 & S^1
    \end{bmatrix},
    \end{equation}
where $P_1$ comes from the block LU factorization of the Jacobian matrix $D_F^1(x_1)$ of the first Newton iteration for the first time step, and $S^1=D_{F_{22}}^1-D_{F_{21}}^1(D_{F_{11}}^1)^{-1}D_{F_{12}}^1$. Note that for the preconditioner $P_1$, we just need to compute the Schur complement once and  use it for all the Newton iterations of all time steps.

{Next, we show the performance of the DF ordering for the Schur complement $S^1$ computation. Again, we use the network given in Figure~\ref{fig:big_net} as an example and perform a FVM discretization of the network using different mesh sizes. We report the computational results in Table~\ref{tab:time_s}, where all timings are given in seconds. Here, $h$ is the mesh size, and $\# D_F$ represents the size of the Jacobian matrix $D_F$ given by~\eqref{eqn:block_lu}. The computations of $(D_{F_{11}}^1)^{-1}$ are performed using the MATLAB {\tt backslash} operator for both cases.}

{We have noticed that for medium problem sizes, the advantage of computing $S^1$ using the block lower-triangular structure obtained from the DF ordering over that without using the DF ordering is not very obvious. This is due to the fact that while computing $S^1$ using the block lower-triangular structure of $D_{F_{11}}^1$ obtained from the DF ordering, MATLAB has an overhead calling the block forward substitution. This overhead is comparable with the hardcore computation time for
medium problem sizes. When the problem size gets bigger, this overhead is less comparable with the hardcore computation time, which is demonstrated by the results in Table~\ref{tab:time_s}. We believe that the advantage of computations with the DF ordering over computations without the DF ordering will become more apparent once we use a tailored high performance computation implementation.}

\vspace{-0.4cm}

\begin{table}[H]
    \centering
    \caption{\footnotesize Computational time (seconds) for Schur complement $S^1$}
    \label{tab:time_s}
    \vspace{0.3cm}
    \begin{tabular}{cccc}
        \toprule
        $h$  & $\# D_F$ & with DF  & without DF   \\
        \midrule
        20  & 2,01e+05 & $8,12$      & 8,75     \\ 
        \addlinespace
        10  & 3,97e+05 & $17,84$     & 19,14      \\ 
        \addlinespace
        5   & 7,91e+05 & $38,44$     & 41,75    \\ 
        \addlinespace
        2.5 & 1,58e+06 &$81,42$     & 87,77    \\ 
        \bottomrule
    \end{tabular}
\end{table}

\vspace{-0.1cm}

Since $P_1$ is a good preconditioner for $D_{F}^1(x_1)$, it is also a good preconditioner to the Jacobian matrix $D_F(x)$ at the other Newton iterations and other time steps, if  close to $D_{F}^1(x_1)$. This is true for the gas networks since the Jacobian matrix~\eqref{eqn:jacobian} has two parts, i.e., the linear part and the linearized part. The linear part is dominant since it models the
transportation phenomenon of the gas while the nonlinear term acts as the friction term for such a transportation. This makes $P_1$ a good preconditioner for solving the Jacobian systems for all Newton steps of all time steps, as it will be demonstrated by numerical experiments in the next section. Note that if we keep updating the preconditioner~\eqref{eqn:preconditioner} more often than simply using a single preconditioner $P_1$ in~\eqref{eqn:p0}, we will obtain better performance for the
preconditioned Krylov solver, which in turn needs more time for preconditioner computation. A compromise has to be made to achieve the optimal performance for the gas network simulation in the term of total computational time. 

\vspace{-0.1cm}
\begin{figure}[H]
    \centering
    \includegraphics[width=0.5\textwidth]{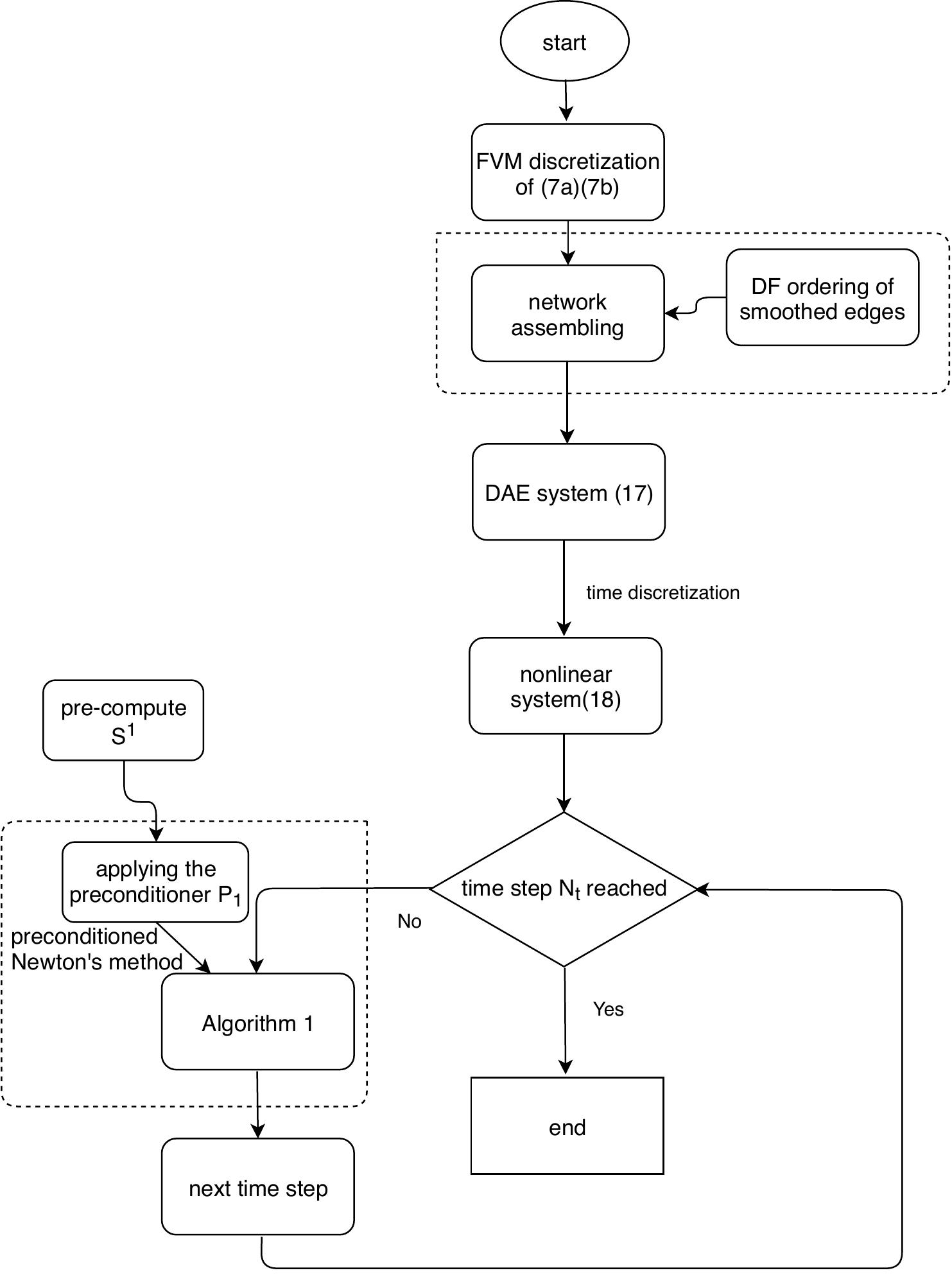}
    \vspace{-0.3cm}
    \caption{Computational diagram for gas network simulation}\label{fig:diagram}
\end{figure}
\vspace{-0.1cm}

By applying the preconditioner $P_1$ in~\eqref{eqn:p0}, we show the computational diagram to illustrate the process of gas network simulation in Figure~\ref{fig:diagram}.

\section{Numerical Results}\label{sec_results}
In this section, we report the performance of our numerical algorithms for the simulation of the gas networks. We apply our numerical algorithms to the benchmark problems of several gas networks given in~\cite{GruHKetal13, GruHJetal14, GruJ15, GruHR16} to show the performance of our methods. All numerical experiments are performed in MATLAB 2017a on a desktop with Intel(R) Core(TM)2 Quad CPU Q8400 of 2.66GHz, 8 GB memory and the Linux 4.9.0-6-amd64 kernel.

\subsection{Comparison of Discretization Methods}
In this section, we compare the performance of the finite volume method (FVM) with that of the finite difference method (FDM) for the discretization of the gas networks. We apply both the FVM and FDM to a pipeline network illustrated in Figure~\ref{fig:single_net}. Parameter settings for this pipeline network are given in~\cite{GruHR16}.

\begin{figure}[H]
         \centering
         \includegraphics[width=0.3\textwidth]{./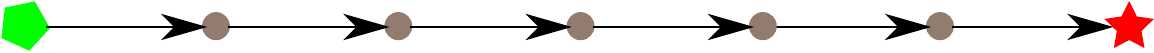}
         \caption{Pipeline network in~\cite{GruHJetal14}}\label{fig:single_net}
\end{figure}

We discretize the pipeline network using FVM and FDM with different mesh sizes, and the discretized pipeline networks result in ordinary differential equations (ODEs) since there is no algebraic constraint. We simulate the ODE systems using the routine {\tt ode15s} in MATLAB over the time horizon $[0,\ 10^5]$ with the same setting of the initial condition for the ODEs. The computational results are given in
Figure~\ref{fig:results_single}, where the $x$-axis represents the mesh sizes in meters. 

\begin{figure}[H]
   \centering
   \subfigure[time steps]
   {\label{fig:time_steps}
   \includegraphics[width=0.45\textwidth]{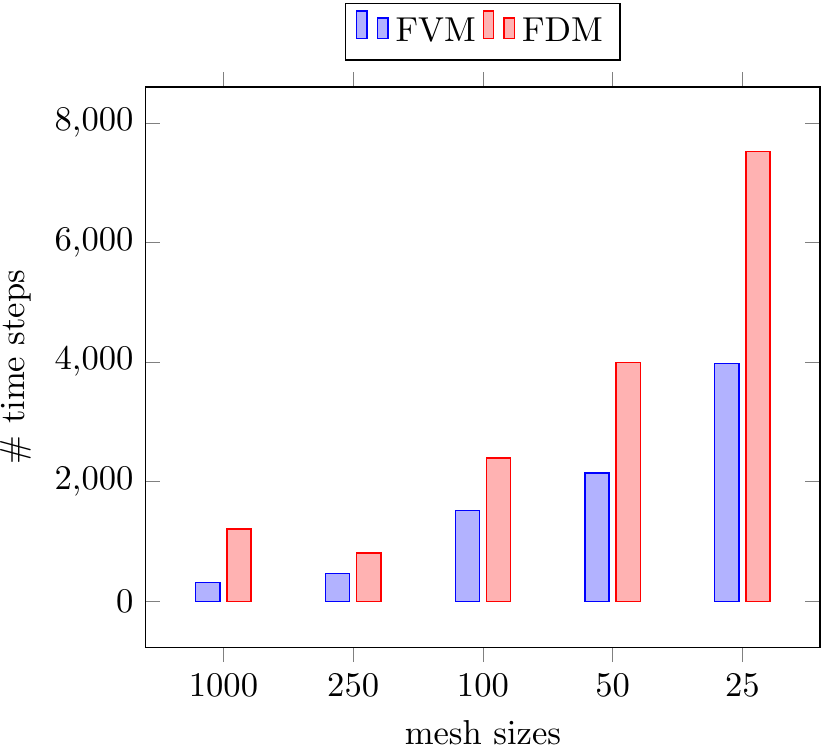}}
   \qquad
   \subfigure[total timing]
   {\label{fig:timing}
   \includegraphics[width=0.45\textwidth]{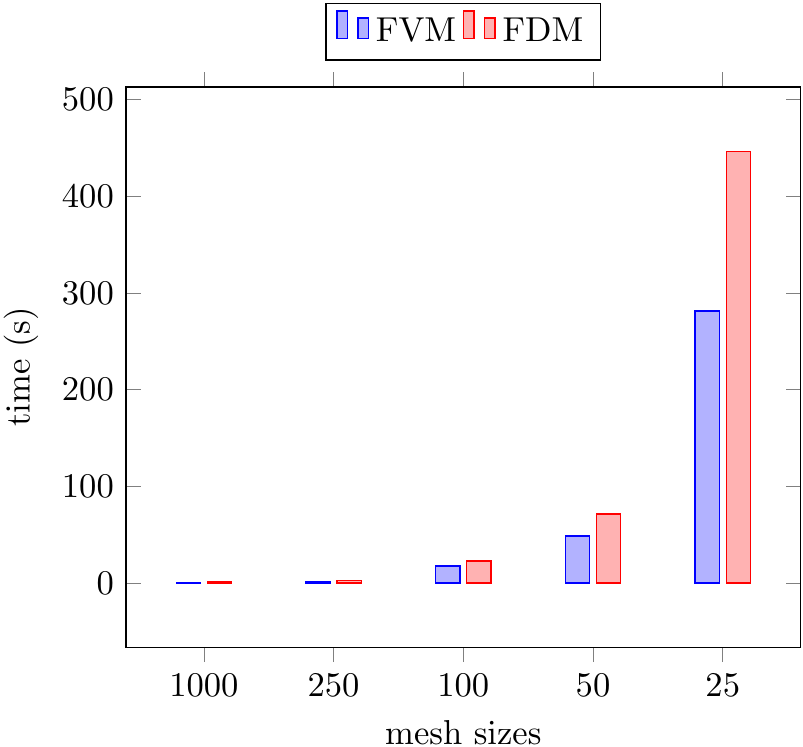}}
     \vspace{-0.3cm}
\caption{Comparison of FVM and FDM for a single pipe network}\label{fig:results_single}
\end{figure}

Figure~\ref{fig:time_steps} shows the number of time steps that {\tt ode15s} uses to simulate the ODEs given by FVM and FDM discretization over the time horizon $[0,\ 10^5]$. One can see that for a given mesh size, we need less time steps for the ODE given by the FVM discretization than the ODE given by the FDM discretization which results in less total computation time for the simulation of the ODE given by the FVM discretization , which is also shown in Figure~\ref{fig:timing}. The background mechanism is not clear since {\tt ode15s} behaves like a black-box. 

Next, we use another network to show that with the same mesh, the model given by the FVM discretization gives more accurate results than the FDM discretization. The test network is given in Figure~\ref{fig:medium_network}, where the network parameters are given in~\cite{GruHJetal14}. We use the FVM and FDM methods to discretize the network in Figure~\ref{fig:medium_network} and apply the computational method depicted in Figure~\ref{fig:diagram}. We choose different mesh sizes for the discretization, and fix the step size for the time discretization to be one second, i.e., $\tau=1$. We plot the mass flow at the supply node $57$ in Figure~\ref{fig:results_medium}. 

The mass flow at node 57 computed by using different discretized DAE models~\eqref{eqn:k_step} is plotted in Figure~\ref{fig:qs_57}, showing similar dynamical behavior of the different models. However, when  we look at the dynamics of the mass flow at the first 5 seconds, we can see quite a big difference in Figure~\ref{fig:qs_57_zoom}. With the mesh refinement, the solutions of the model given by both the FVM and the FDM discretizations converge. Moreover, we can infer that we can use a bigger mesh size for the FVM discretization than for the FDM discretization to get the same accuracy.

\begin{figure}[H]
       \centering
       \includegraphics[width=0.55\textwidth]{./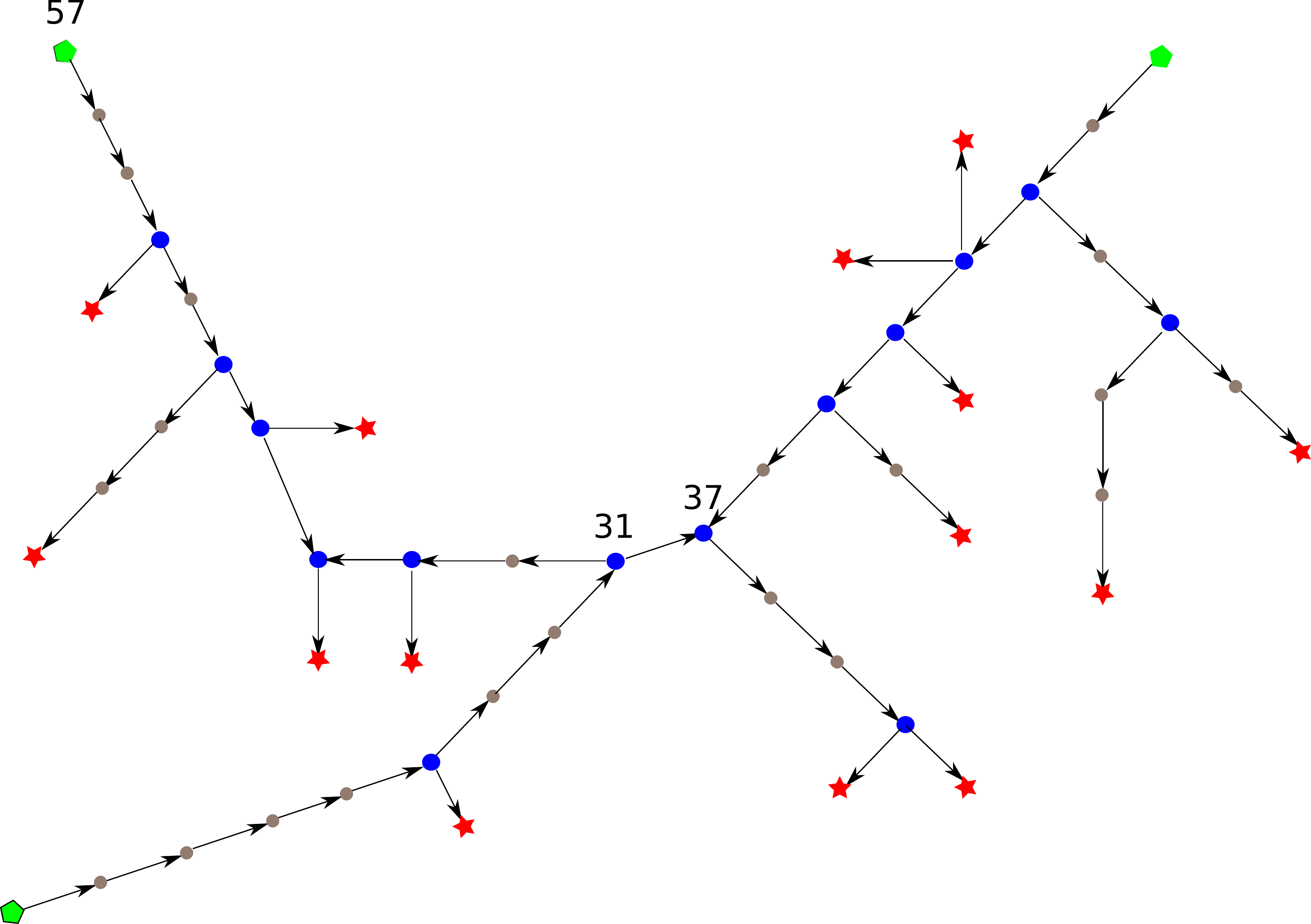}
       \caption{Medium size network}\label{fig:medium_network}
\end{figure}

The computational results given by Figure~\ref{fig:results_single} and Figure~\ref{fig:results_medium} show that when we use the same mesh size to discretize the network, the model given by the FVM discretization is more accurate than the model by the FDM discretization. To get the same model accuracy, we can use a bigger mesh size to discretize the network by FVM than that by FDM\@. This in turn yields a smaller model given by the FVM discretization than the model given by the FDM
discretization. This in turn means that the FVM discretized model is easier to solve than the
FDM discretized model.

\begin{figure}[H]
  \centering
  \subfigure[supply flow at node $57$]
  {\label{fig:qs_57}
  \includegraphics[width=0.47\textwidth]{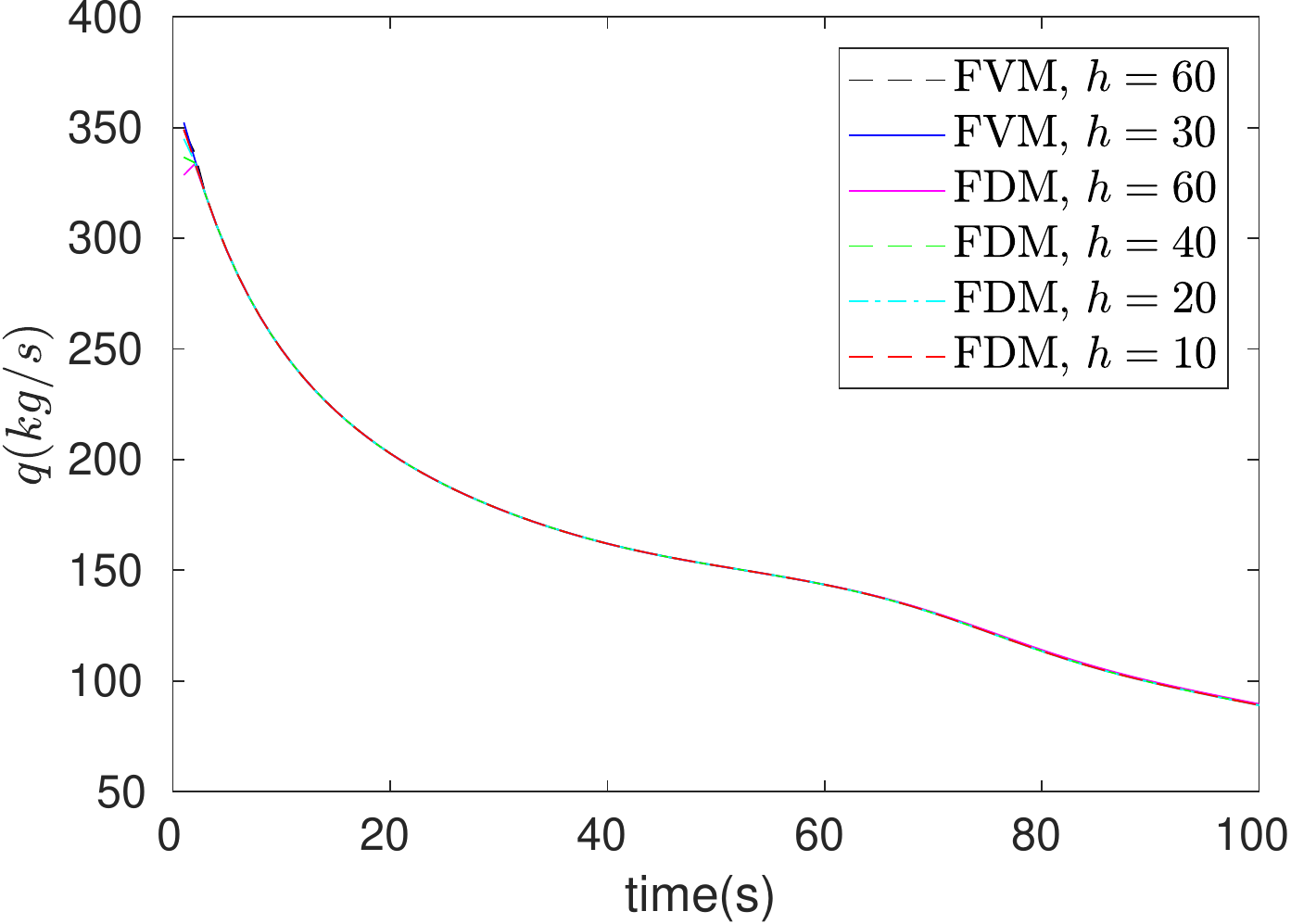}}
  \qquad
  \subfigure[zoomed supply flow at node $57$]
  {\label{fig:qs_57_zoom}
  \includegraphics[width=0.46\textwidth]{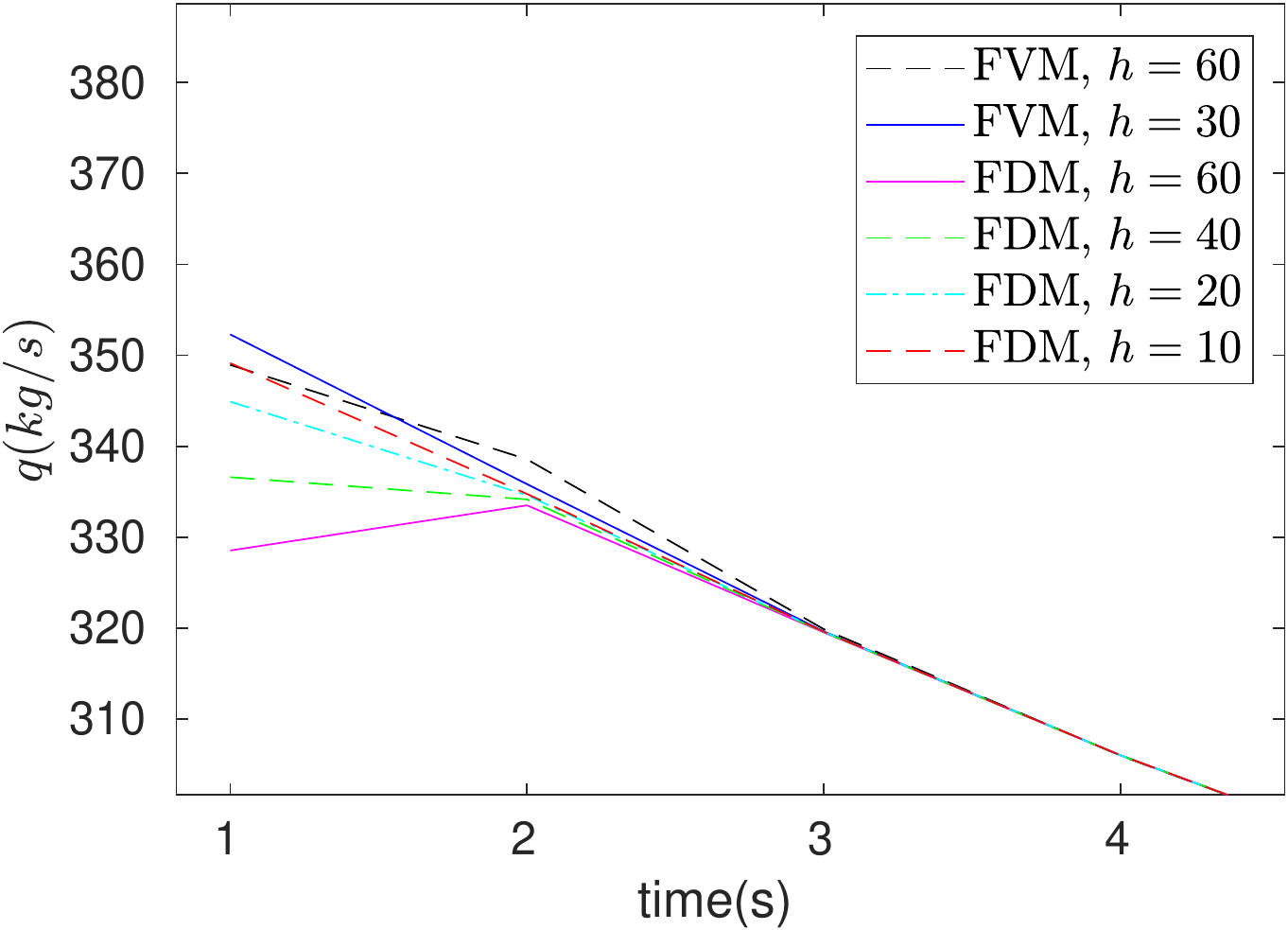}}
    \vspace{-0.3cm}
\caption{Comparison of FVM and FDM for a medium network}\label{fig:results_medium}
\end{figure}

We also plot the condition number of the Jacobian matrix ($\kappa(D_F)$) of all the Newton iterations for the first time step with a mesh size $h=60$ to discretize the DAE, which are given in Figure~\ref{fig:condition_number}. It illustrates that the condition number of the Jacobian matrix of the FVM discretized model is about 10 times smaller than the condition number of the Jacobian matrix of the FDM discretized model, which makes solving such a FVM discretized model easier than solving
a FDM discretized model.

Computational results in Figure~\ref{fig:results_single}--\ref{fig:condition_number} show that the finite volume method has a big advantage over the finite difference method. When using the same mesh size for discretization, FVM gives a more accurate model than the FDM discretization. Moreover, the Jacobian matrix from the FVM discretized model has a better condition number than the Jacobian matrix from the 
FDM discretized model, which makes it easier to simulate the FVM discretized model. To get the same model accuracy, the size of the FVM discretized model is smaller than the size of the FDM discretized model, and it is therefore computationally cheap. For the comparison of the finite element method (FEM) with FDM for the gas network simulation, we refer to an early study in~\cite{Osiadacz1989}, where the authors preferred FDM due to the comparable accuracy with FEM and less
computational time.

\begin{figure}[H]
   \centering
   \includegraphics[width=0.45\textwidth]{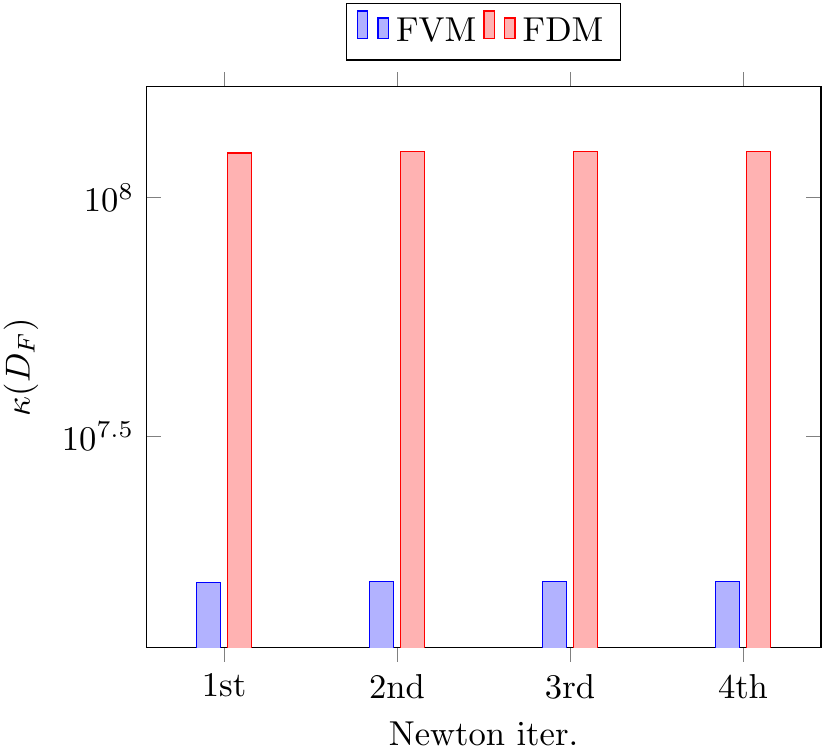}
     \vspace{-0.3cm}
 \caption{Condition number of the Jacobian matrix, 1st time step, $h=60$}\label{fig:condition_number}
\end{figure}

\subsection{Change of flow direction}
The basis of our modeling is a directed graph, with the implicit assumption that the gas flow follows that direction. However, the flow direction may change due to the change of operation conditions of the gas network. In this part, we show that the direction is just a theoretical construction but that the gas is allowed to flow in either direction meaning that the mass flow can be negative and does not influence the performance of our methods. The change of the flow direction does not change the mathematical formulation of algebraic constraints. Therefore, the structure of the system stays
unchanged with respect to the change of the flow direction. This means that we do not require the foreknowledge of the flow direction. 

First, we test two different cases, which corresponds to two different flow direction profiles of the network, cf. Figure~\ref{fig:forked}. Case 1 corresponds to $p_s^{(1)}=p_s^{(2)}=30$ bar, and $q_d=30$ kg/s while case 2 corresponds to $p_s^1=30$ bar, $p_s^2=20$ bar, and $q_d=30$ kg/s. We plot the mass flow at the supply pipe $1$ and $2$ in Figures~\ref{fig:case_1_forked}--\ref{fig:case_2_forked}. 

Figure~\ref{fig:case_1_forked} shows that the mass flow at both supply nodes approaches the steady state after oscillation for a short while, and both input mass flows have a positive sign. This represents that both supply nodes inject gas flow into the network to supply gas to the demand node 6. After changing the operation condition of the network, e.g., changing the pressure at supply nodes, the mass flow is redistributed as shown in Figure~\ref{fig:case_2_forked}. The mass flow at
supply node $10$ becomes negative after a few seconds and remains negative after the network reaches steady state. For this case, the supply pipe at node $10$ acts as a demand pipe since gas flows out of the network there. For both cases, the equality $q_s^1+q_s^2=\sum q_d$ holds which can be easily see by looking at the steady state solution.

We also apply two different cases to a more complicated network given in Figure~\ref{fig:medium_network} to test the robustness of our methods. Case 1 corresponds to $p_s^{55}=p_s^{56}=50.5$ bar, $p_s^{57}=50.8$ bar while case 2 has $p_s^{55}=p_s^{56}=50.5$ bar, and $p_s^{57}=50.0$ bar. The demand of gas at the demand nodes is the same for both cases. We show the mass flow at the pipe that connects node $31$ and $37$, which also connects two sub-networks. The mass flow for the
pipe $31\rightarrow 37$ for different cases is shown in Figure~\ref{fig:flow_pip_31_37}. The initial conditions of the gas network for the simulation of the two different cases are set the same.

\begin{figure}[H]
    \centering
    \subfigure[mass flow at node $1$]
        {\label{fig:qs}
          \includegraphics[width=0.45\textwidth]{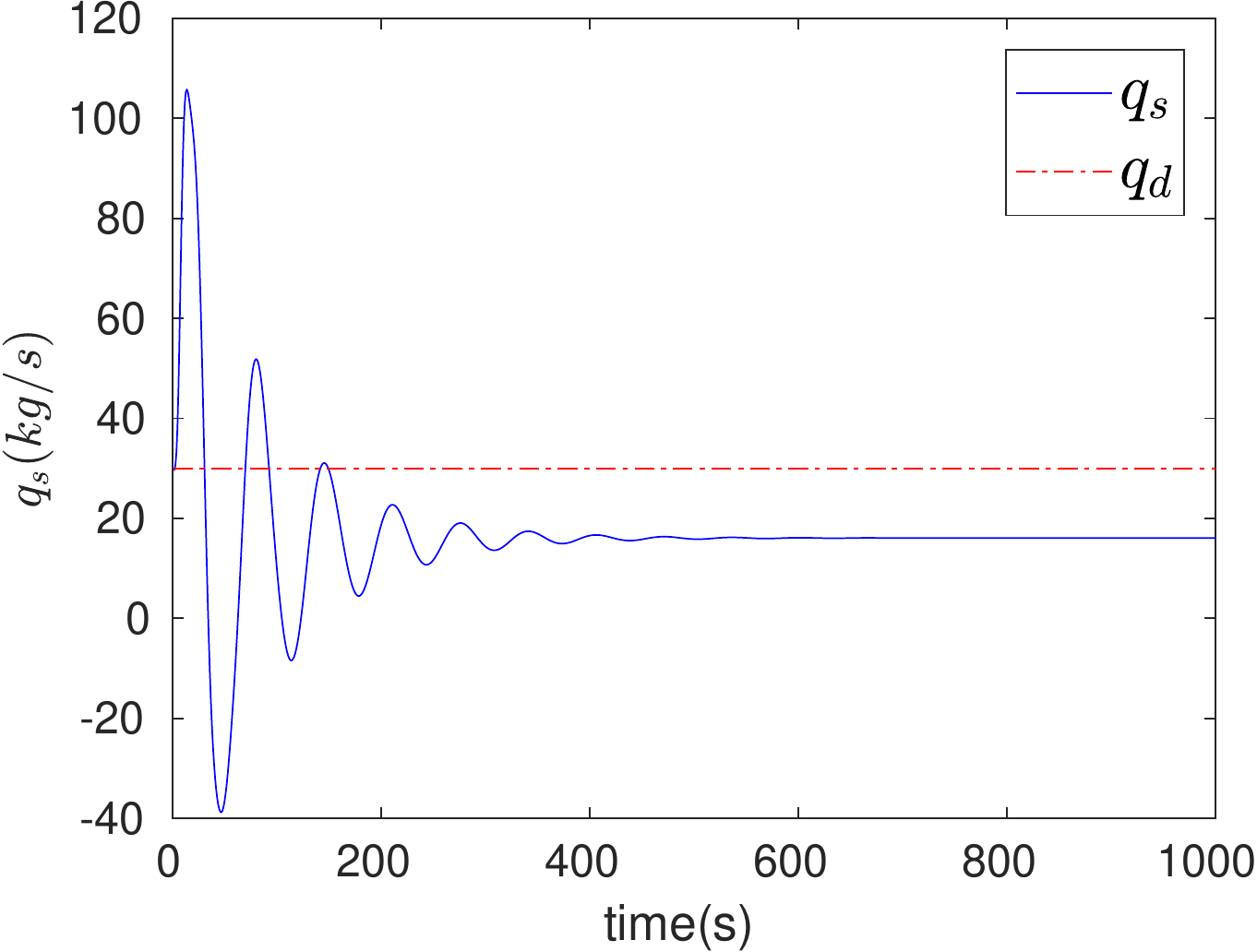}}
    \qquad 
    \subfigure[mass flow at node $10$]
        {\label{fig:qs_zoom}
           \includegraphics[width=0.45\textwidth]{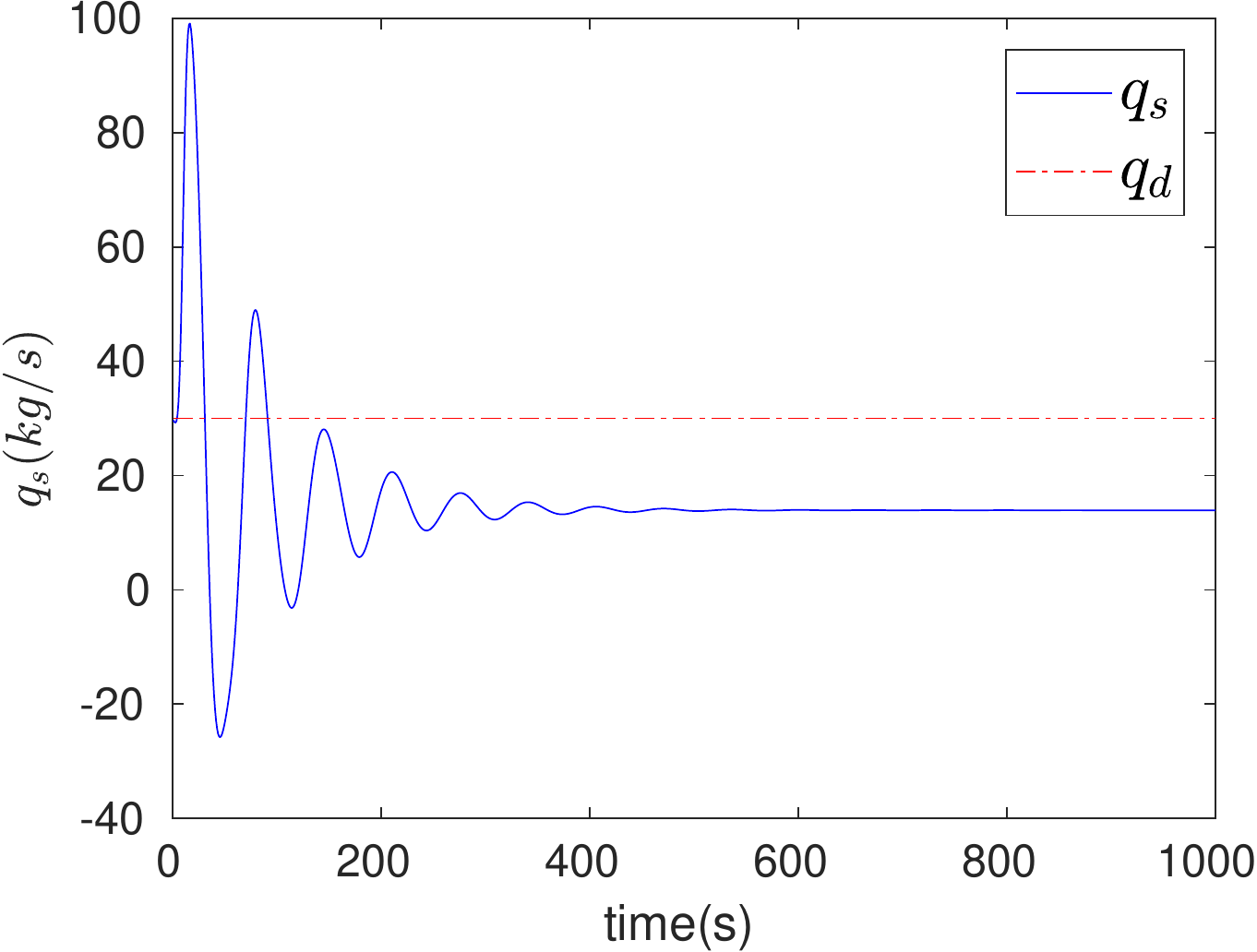}}
         \vspace{-0.3cm}
         \caption{Mass flow at supply nodes for case 1}\label{fig:case_1_forked}
\end{figure}

\vspace{-0.5cm}

\begin{figure}[H]
    \centering
    \subfigure[mass flow at node $1$]
        {\label{fig:qs}
          \includegraphics[width=0.45\textwidth]{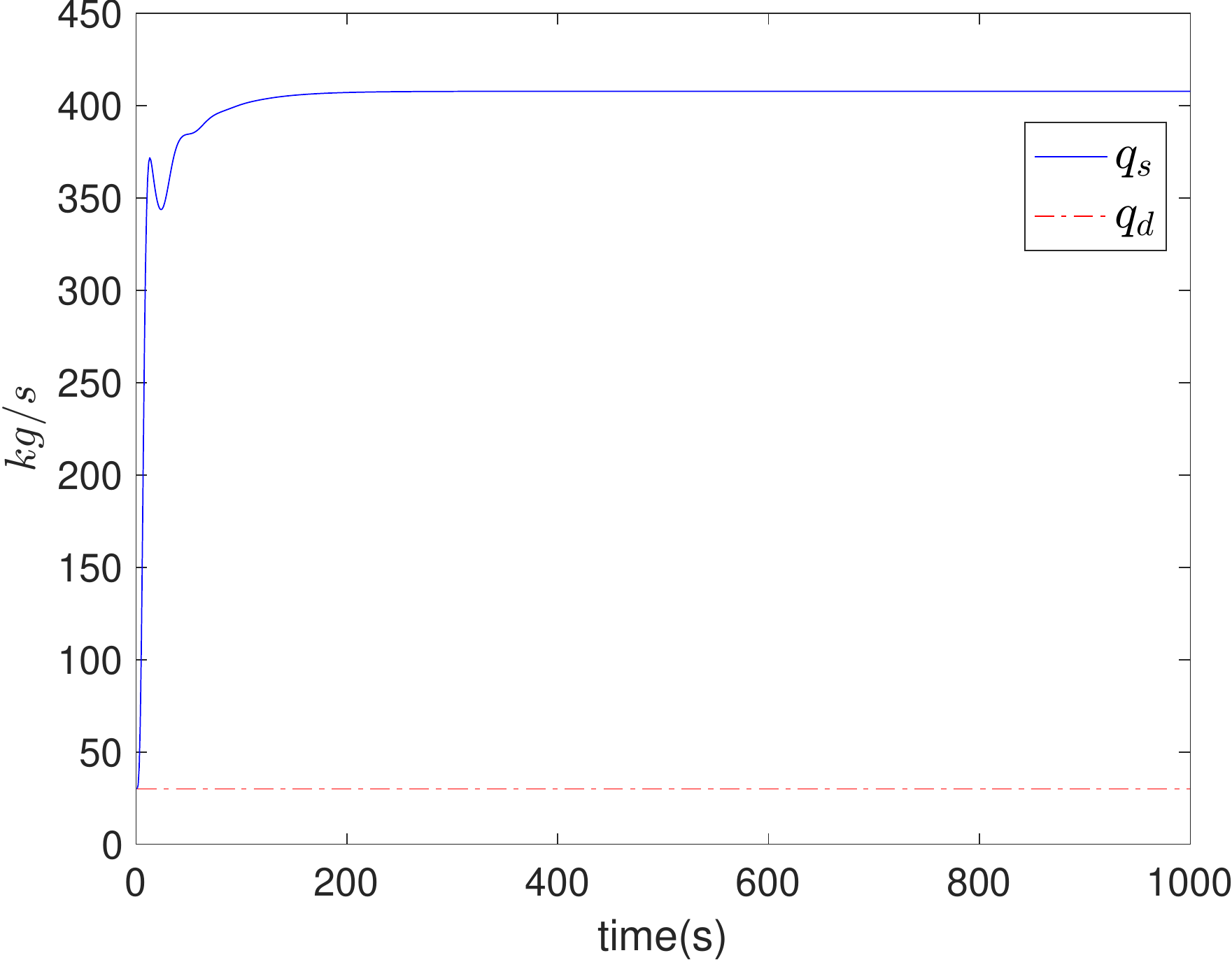}}
    \qquad
    \subfigure[mass flow at node $10$]
        {\label{fig:qs_zoom}
           \includegraphics[width=0.45\textwidth]{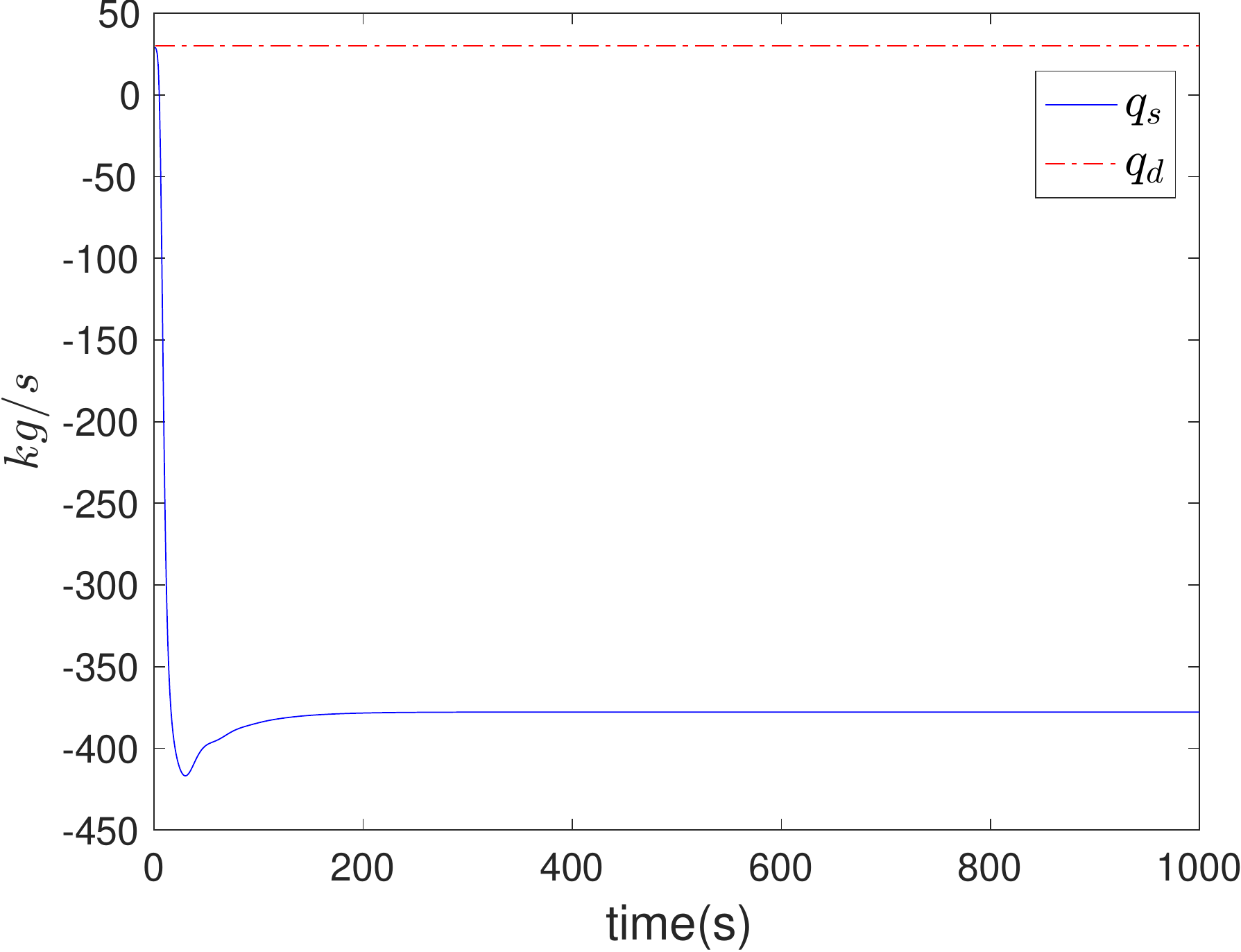}}
    \vspace{-0.3cm}
  \caption{Mass flow at supply nodes for case 2}\label{fig:case_2_forked}
\end{figure}

\vspace{-0.5cm}

\begin{figure}[H]
    \centering
    \subfigure[case 1]
        {\label{fig:case_1}
          \includegraphics[width=0.45\textwidth]{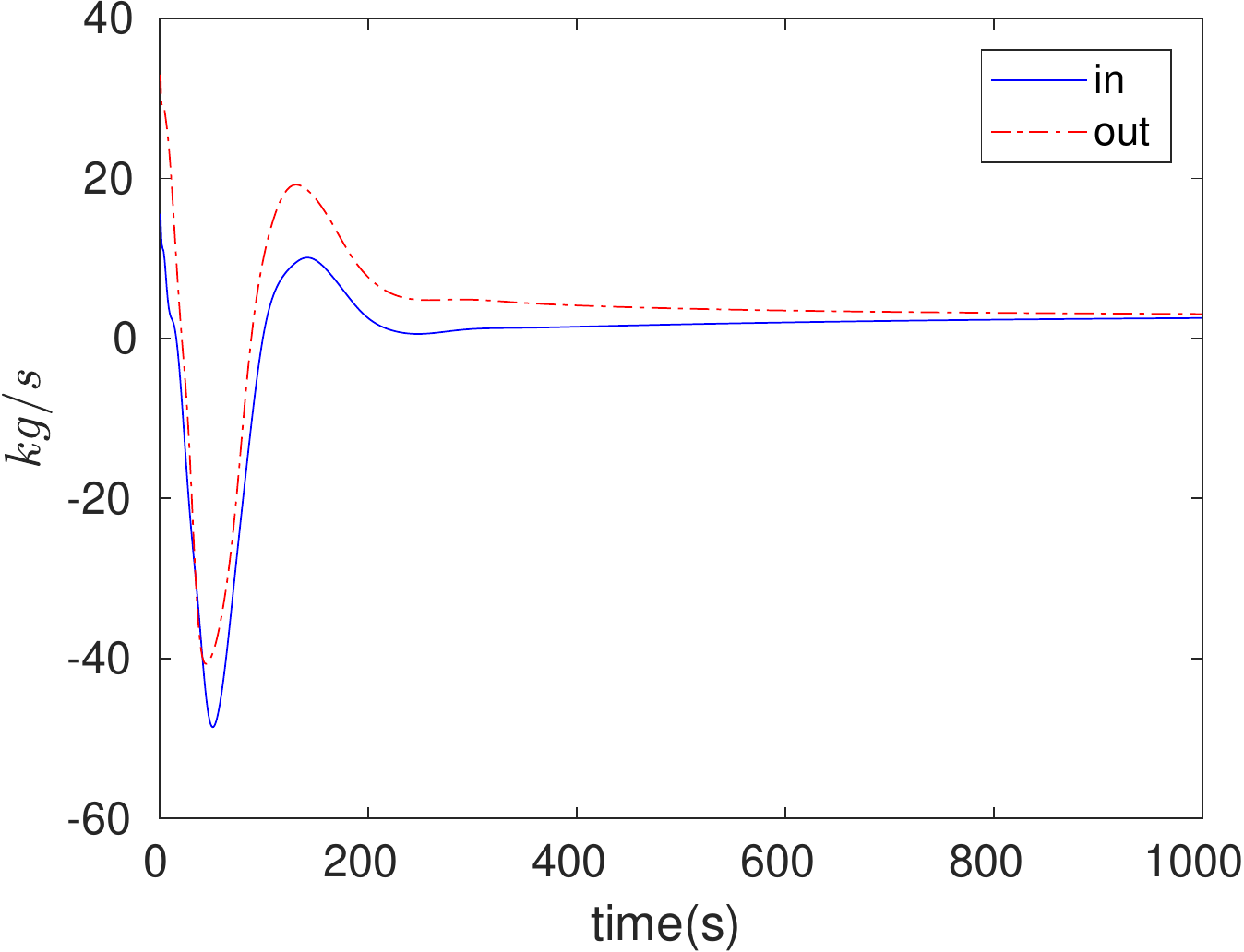}}
    \qquad
    \subfigure[case 2]
        {\label{fig:case_2}
           \includegraphics[width=0.45\textwidth]{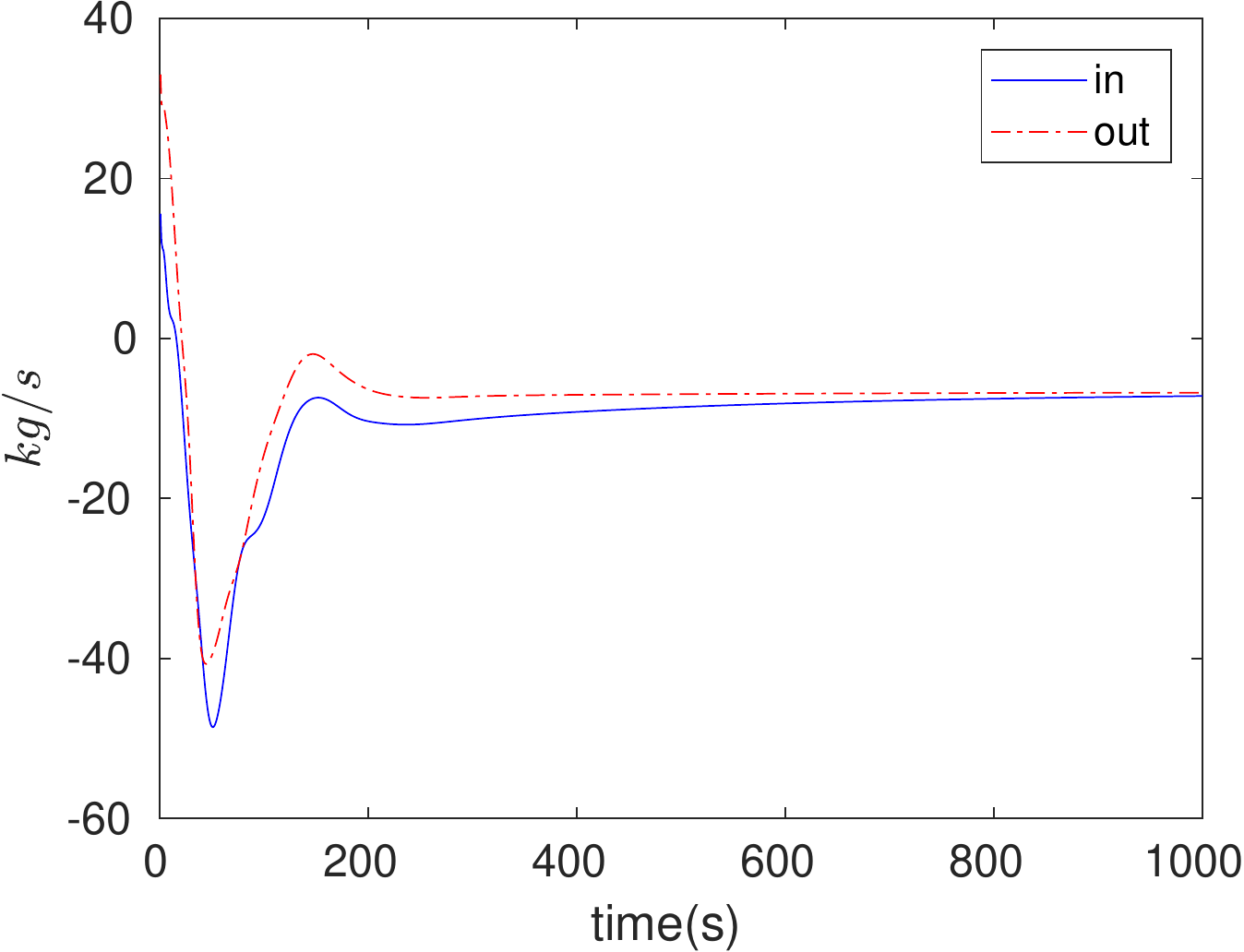}}
     \vspace{-0.3cm}
 \caption{Mass flow for the pipe $31\rightarrow 37$}\label{fig:flow_pip_31_37}
\end{figure}

The simulation results in Figure~\ref{fig:flow_pip_31_37} show that the flow direction at pipe $31\rightarrow 37$ changes for the above two cases. The steady state of the mass flow for the two cases shows that the flow can travel in a direction opposite to the prescribed flow direction, and the inflow at node $31$ is equal to the outflow at node $37$ for the steady state. The imbalance between the inflow and outflow in the transient process is necessary to build the pressure profile of the network.

Computational results in Figure~\ref{fig:case_1_forked}--\ref{fig:flow_pip_31_37} show that the gas can flow in the opposite direction as the directed graph suggests. We do not need to introduce another set of variables and switch to another model when the flow changes direction. 

\subsection{Convergence Comparison}
After the FDF ordering, we apply Algorithm~\ref{alg:newton} to solve the nonlinear equation~\eqref{eqn:k_step} at each time step. Newton's method requires the computation of the Jacobian matrix at each iteration, which is typically expensive. We try to reduce the cost of computing the Jacobian matrix, and we approximate the nonlinear term $\frac{q_m^k |q_m^k|}{q_m^k}$ at the $m$-th Newton iteration of time step $k$ as,
\[
    \frac{q_m^k |q_{m-1}^k|}{p_{m-1}^k}\approx \frac{q_m^k |q_m^k|}{p_m^k}.
\]
This approximation avoids computing the partial derivatives of the nonlinear term, and results in a diagonal coefficient matrix for the linearization. We apply this approximation to solve the nonlinear equation iteratively, and this is the Picard iteration. Here, we study the convergence of both Newton's method and the Picard method for solving~\eqref{eqn:k_step}. We plot the $2$-norm of the nonlinear residual, i.e., $\|F\|_2$ of the first and $50$-est time step at
each Newton and Picard iteration. Both Newton and Picard iteration start with the same initial condition and the time step size for both methods is set as $\tau=1$.

We first study the convergence for the simulation of the network shown in Figure~\ref{fig:medium_network}. We discretize this network using the finite volume method with mesh size $h=50$, and both Newton's method and the Picard method are stopped when $\|F\|_2\leq 10^{-5}$. The results given in Figure~\ref{fig:conv_comp} show that both the Newton and Picard iteration have a fast convergence rate. However, the Picard method takes more than twice the number of iterations to reach the same
stopping criterion. This means that we need to solve more than twice as many linear systems for the Picard method than for
Newton's method, while solving such a linear system is the most time consuming part of such a nonlinear iteration. Moreover, the sizes of the linear systems at each Newton and Picard iteration are the same. The additional cost from the Picard method is much bigger than the cost saved from the simplification of the derivatives computation. This makes the Picard method not as practical as Newton's method for solving such a nonlinear system. 

Next, we test on a bigger network given in Figure~\ref{fig:big_net}. We also discretize this network with the finite volume method with $h=50m$, and we stop both the Newton and Picard iteration when $\|F\|_2 \leq 10^{-4}$. The results are given by Figure~\ref{fig:conv_comp_big}. Again, we observe similar convergence behavior for Newton's method and the Picard method with the convergence results shown in Figure~\ref{fig:conv_comp}. The Picard method needs more than twice the number of
linear system solves than Newton's method. When we need more accurate simulations of a gas network, smaller mesh sizes are necessary increasing the difference in the computational effort.

\begin{figure}[H]
    \centering
    \subfigure[$1$st time step]
        {\label{fig:1st}
          \includegraphics[width=0.45\textwidth]{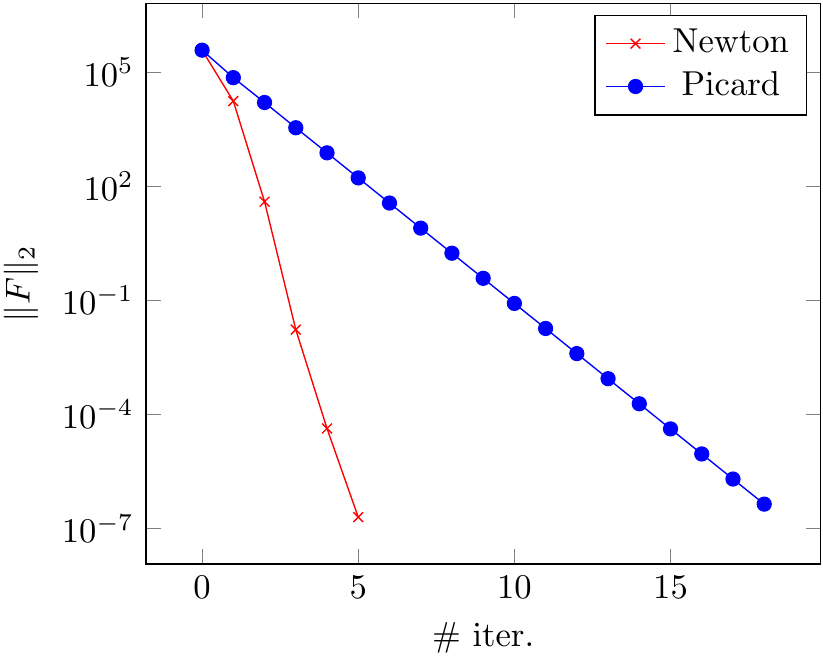}}
    \qquad  
    \subfigure[$50$th time step]
        {\label{fig:50th}
           \includegraphics[width=0.45\textwidth]{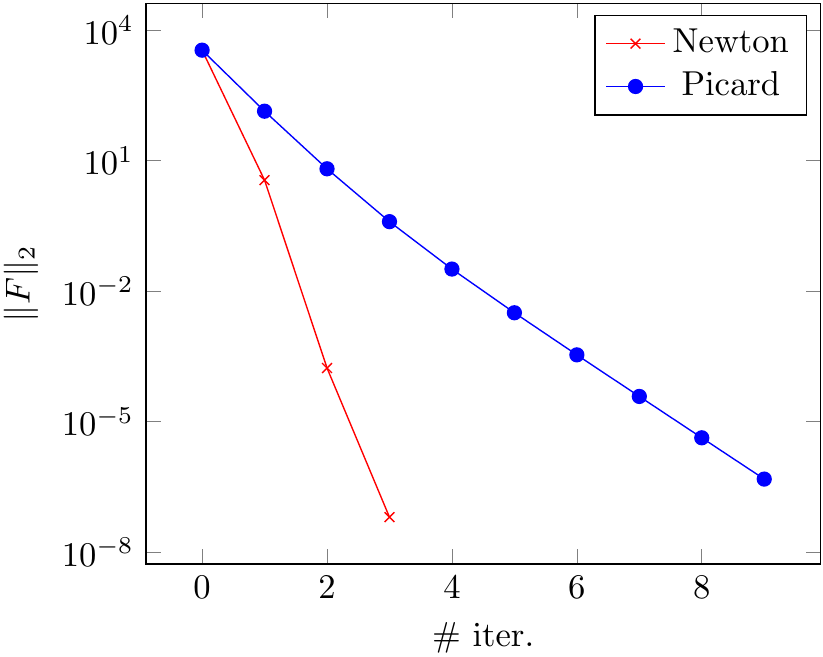}}
      \vspace{-0.3cm}
     \caption{Convergence comparison for network in Figure~\ref{fig:medium_network}}\label{fig:conv_comp}
\end{figure}

\begin{figure}[H]
    \centering
    \subfigure[$1$st time step]
        {\label{fig:1st_big}
          \includegraphics[width=0.45\textwidth]{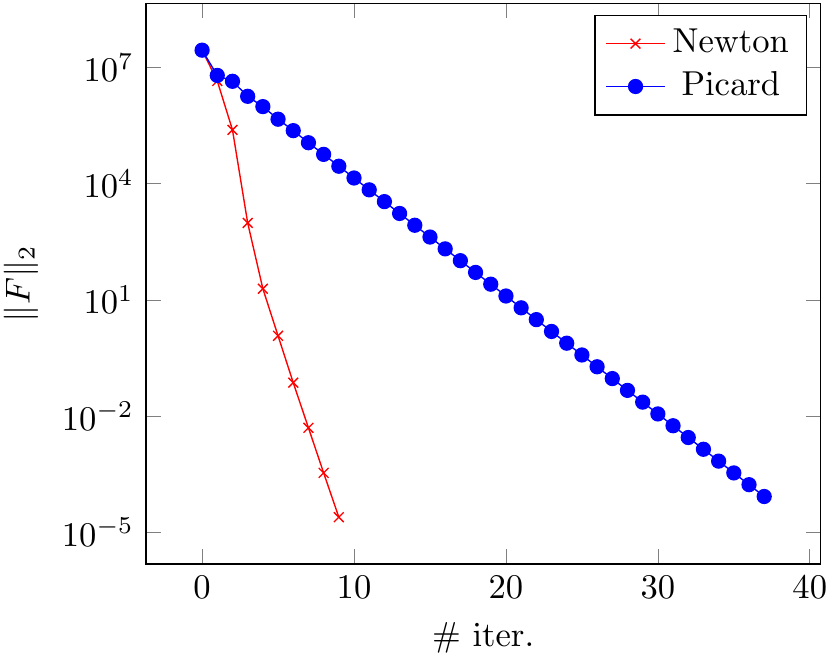}}
    \qquad 
    \subfigure[$50$th time step]
        {\label{fig:50th_big}
           \includegraphics[width=0.45\textwidth]{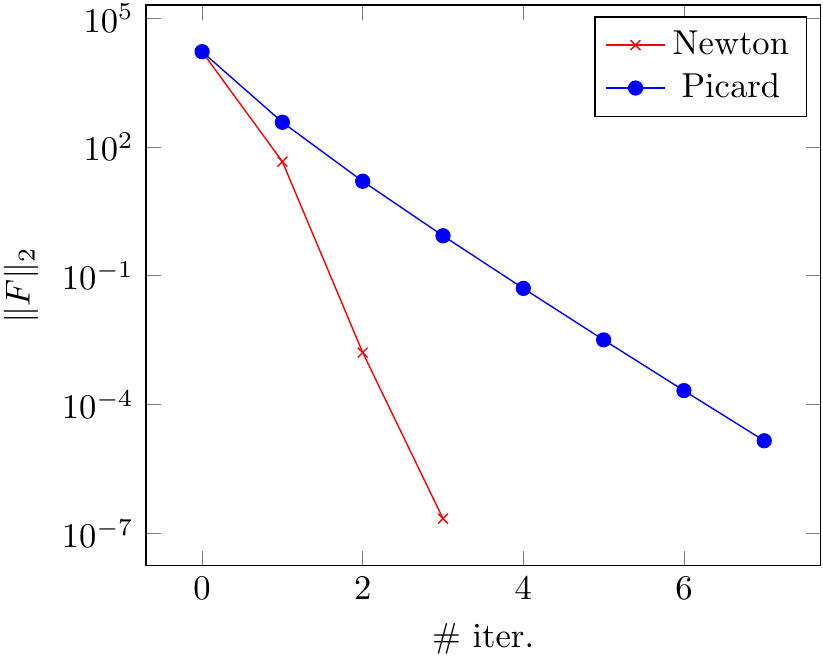}}
           \vspace{-0.3cm}
         \caption{Convergence comparison for network in Figure~\ref{fig:big_net}}\label{fig:conv_comp_big}
\end{figure}
Computational results in Figures~\ref{fig:conv_comp} and~\ref{fig:conv_comp_big} show that for the simulation of gas networks, Newton's method is superior to the Picard method in both the computational complexity and the convergence rate.

\subsection{Preconditioning Performance}
As introduced in the previous section, the biggest challenge for applying Algorithm~\ref{alg:newton} to simulate a gas network lies in the effort spent to solve the linear system at each Newton iteration. For large-scale networks, we need smaller mesh sizes to discretize such networks and this results in larger sizes of the DAEs. Therefore, we need to employ iterative solvers to compute the solution of such a large-scale linear system at each Newton iteration, while preconditioning is
essential to
accelerate the convergence of such iterative solvers. In this part, we study the performance of the preconditioner~\eqref{eqn:p0}.

We test the performance of the preconditioner for the network in Figure~\ref{fig:big_net} using different mesh sizes for the finite volume discretization. At each Newton (outer) iteration, we solve a linear system by applying an (inner) Krylov solver, e.g., the IDR(s) solver~\cite{vanGijzen2011, Sonneveld2008}, and this is called Newton-Krylov method. Note that the Newton-Krylov method is an inexact Newton method, and at each Newton iteration, we apply the IDR(s) method to solve the linear
system up to an accuracy $\varepsilon_{tol}$, i.e.,
\[
    \|F(x_m) + DF(x_m)(x - x_m)\|\leq \varepsilon_{tol}\|F (x_m)\|,
    \]
where $\varepsilon_{tol}$ is related to the forcing term for an inexact Newton's method~\cite{Kelley2003}. Since Newton-Krylov method is inexact, we show its convergence with respect to different tolerances of the Krylov solver, i.e., $\|F\|_2$ with respect to different settings of $\varepsilon_{tol}$. We use the ``true'' residual computed by using a direct method, i.e., the {\tt backslash} operator implemented in MATLAB for comparison. We report the computational results for the FVM discretization with mesh sizes of $50$ and $40$, and the time step size $\tau$ is set to be $1$. For the convergence rate of the inexact Newton method with respect to $\varepsilon_{tol}$, we refer
to~\cite{DemES1982}.

The computational results of the nonlinear residual $\|F\|_2$ in Figure~\ref{fig:nl_res} for two different mesh sizes show that the accuracy of the inner iteration loop can be set relatively low while the convergence of the outer iteration can still be comparable with more accurate inner loop iterations. The convergence properties of the Newton iteration for the first time step are the same if the inner loop is solved accurately, or the inner loop is solved up to an accuracy of $10^{-6}$ or $10^{-4}$. If the inner loop is solved up to an accuracy of $10^{-3}$, only one more Newton iteration is needed. Moreover, the convergence behavior of the Newton iteration for different inner loop solution tolerances are the same for the $10$-th time step. If lower inner loop accuracy is used, less computational effort is needed. This reduces the computational complexity. The number of IDR($4$)
iterations for different inner loop tolerances are reported in Figure~\ref{fig:idr_iter_1st}--\ref{fig:idr_iter_10th}.

\begin{figure}[H]
    \centering
    \subfigure[$h=50$, $1$st time step]
        {\label{fig:nl_res_1st_50}
          \includegraphics[width=0.45\textwidth]{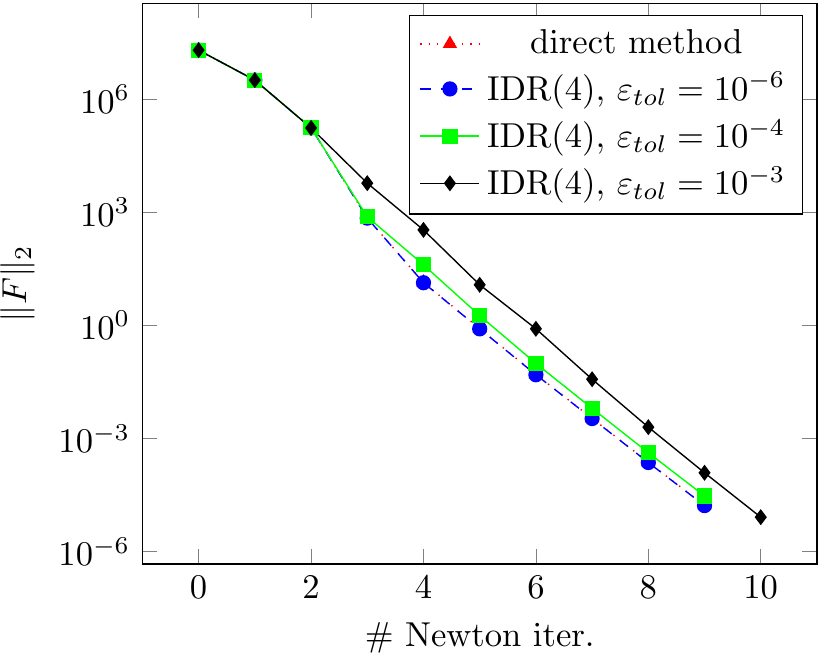}}
    \quad
     \subfigure[$h=50$, $10$-th time step]
        {\label{fig:nl_res_10th_50}
          \includegraphics[width=0.45\textwidth]{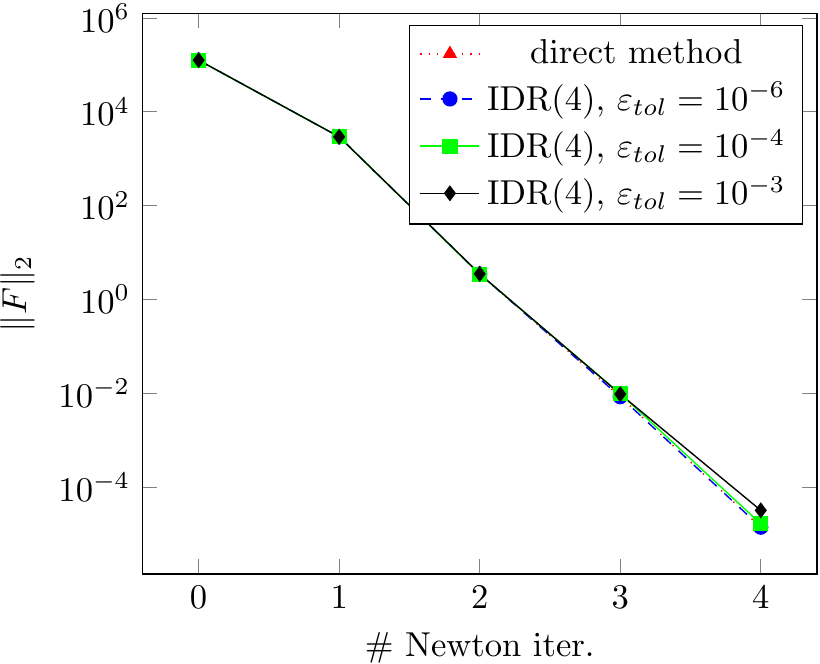}}

    \subfigure[$h=40$, $1$st time step]
        {\label{fig:nl_res_1st_40}
           \includegraphics[width=0.45\textwidth]{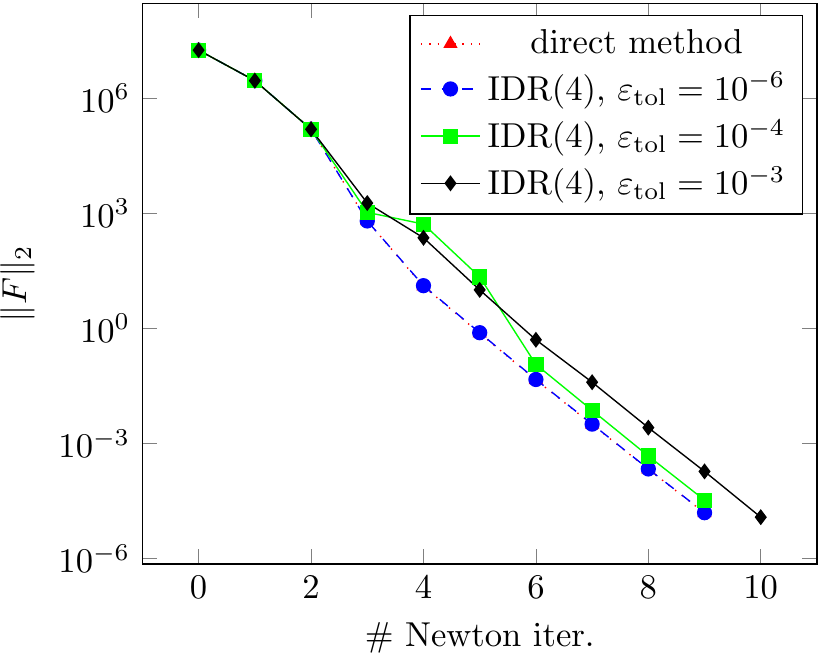}}
     \quad
     \subfigure[$h=40$, $10$-th time step]
        {\label{fig:nl_res_10th_40}
           \includegraphics[width=0.45\textwidth]{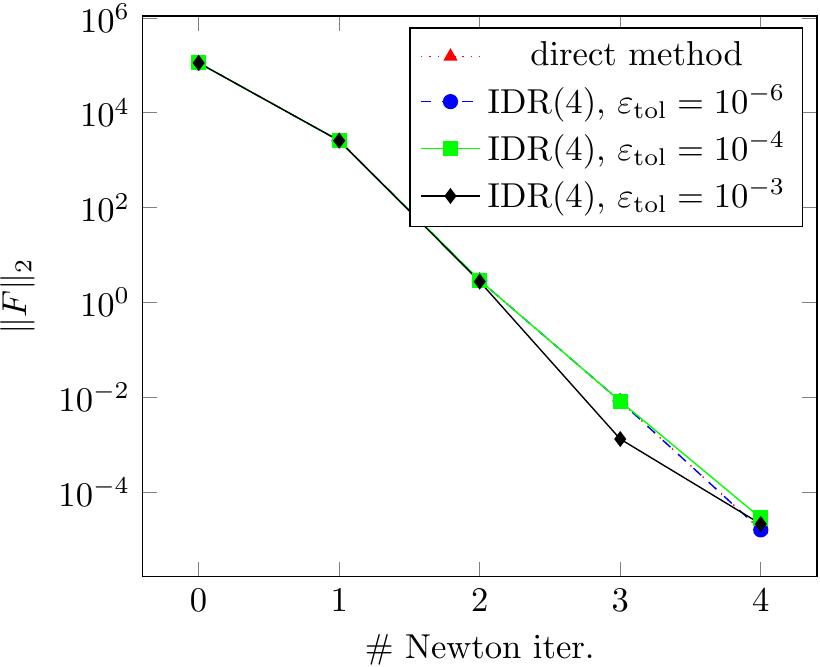}}
     \vspace{-0.3cm}
         \caption{Nonlinear residual at the first and tenth time step}\label{fig:nl_res}
\end{figure}

\begin{figure}[H]
    \centering
    \subfigure[$h=50$]
        {\label{fig:iter_1st_50}
          \includegraphics[width=0.48\textwidth]{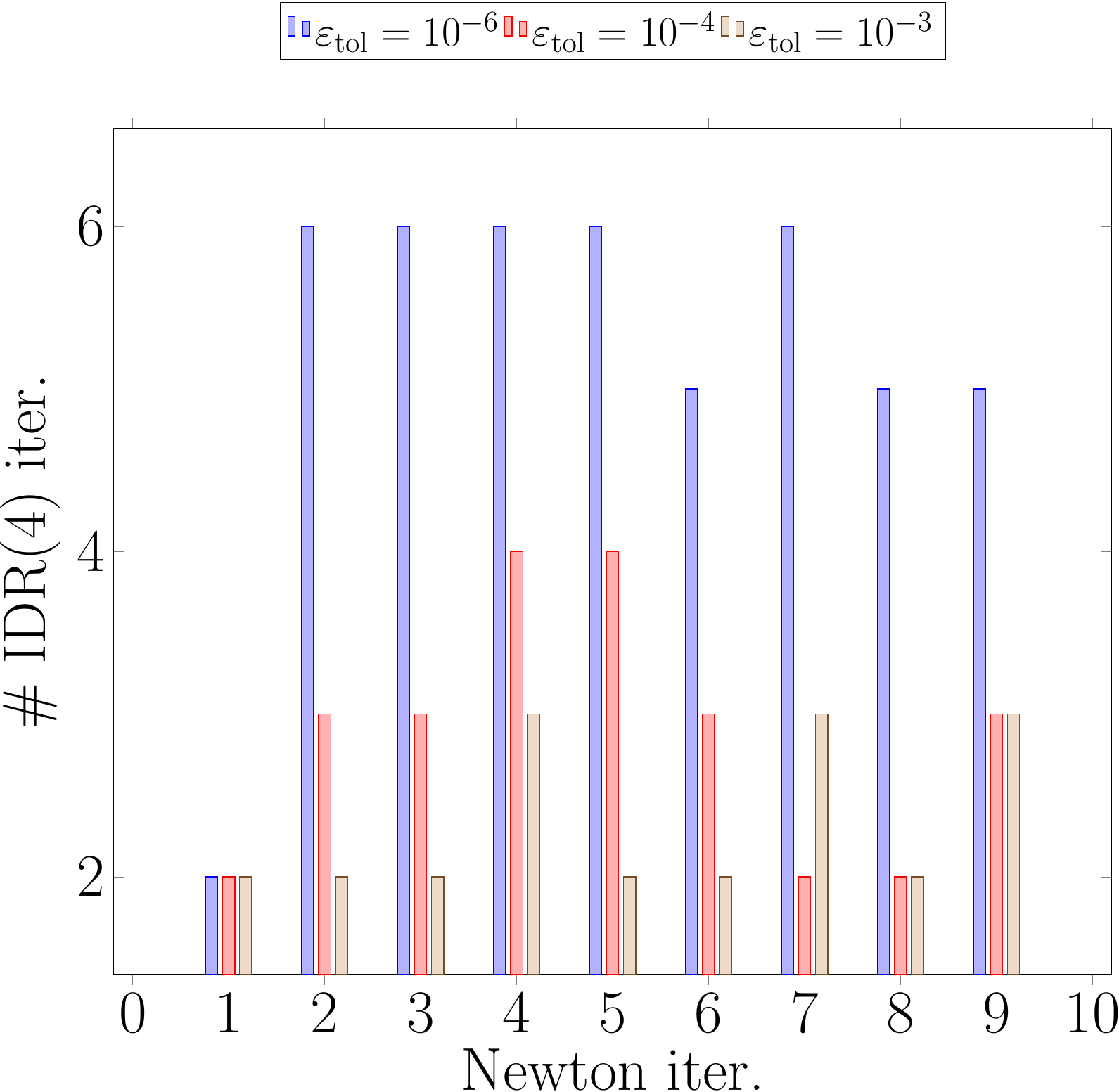}}
    \subfigure[$h=40$]
        {\label{fig:iter_1st_40}
           \includegraphics[width=0.48\textwidth]{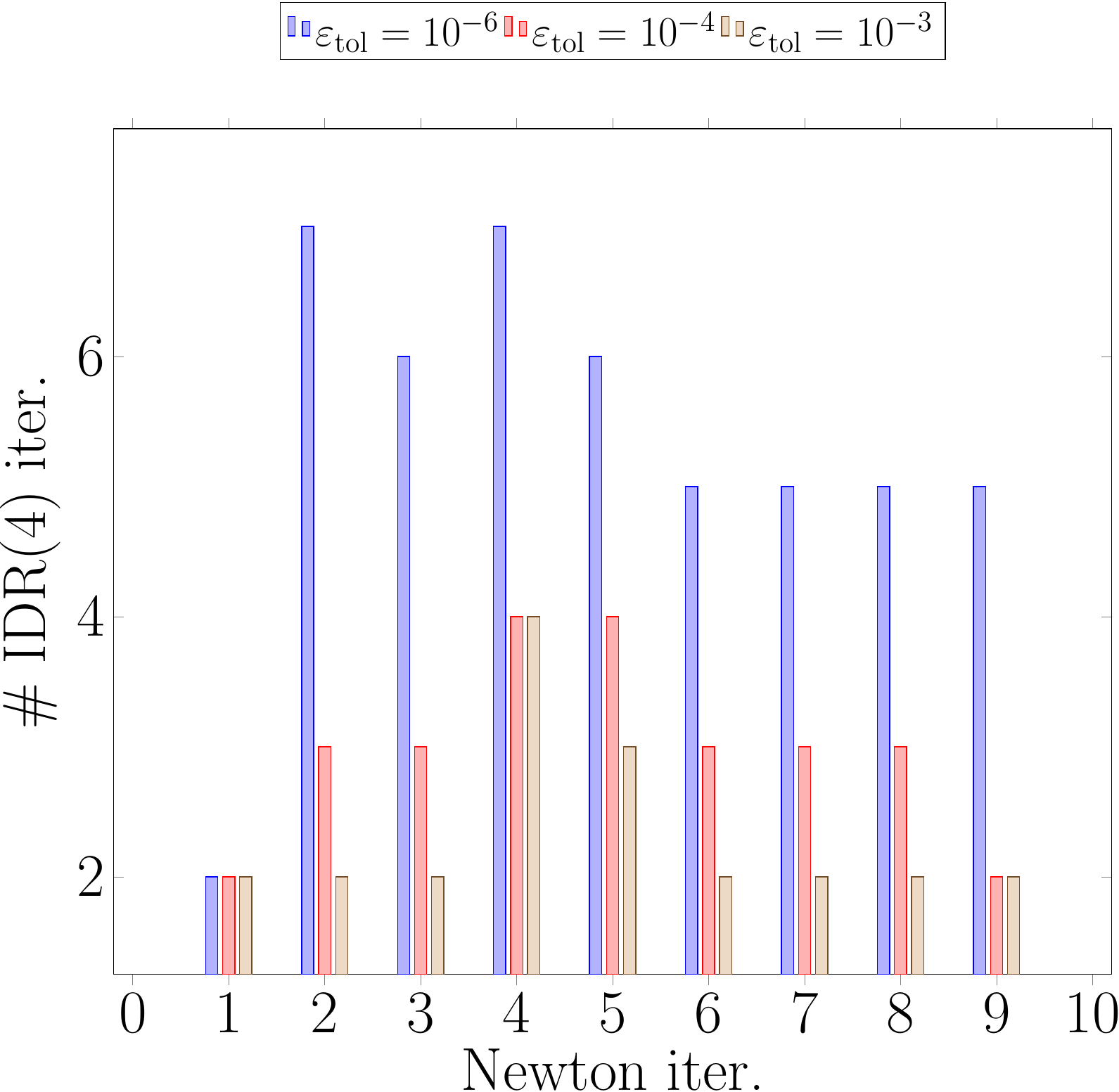}}
    \vspace{-0.3cm}
       \caption{Number of IDR($4$) iterations at the first time step}\label{fig:idr_iter_1st}
\end{figure}

The computational results in Figure~\ref{fig:iter_1st_50} show that the total number of IDR($4$) iterations ($47$) for $\varepsilon_{\text{tol}}=10^{-6}$ is almost twice the total number of IDR($4$) iterations ($24$) for $\varepsilon_{\text{tol}}=10^{-3}$. This demonstrates that the computational work for the first time step can be reduced to almost $50\%$ since the most time consuming part inside each Newton iteration is the IDR($4$) solver. Similar results are shown by Figure~\ref{fig:iter_1st_40}.
As the system gets closer to steady state, less Newton iterations are needed, and the IDR($4$) solver also needs less iterations, as shown in Figure~\ref{fig:idr_iter_10th}. At this stage, IDR($4$) with $\varepsilon_{\text{tol}}=10^{-3}$ still needs less work than the IDR($4$) with $\varepsilon_{\text{tol}}=10^{-6}$ but is no longer as significant.

\begin{figure}[H]
    \centering
    \subfigure[$h=50$]
        {\label{fig:iter_10th_50}
          \includegraphics[width=0.46\textwidth]{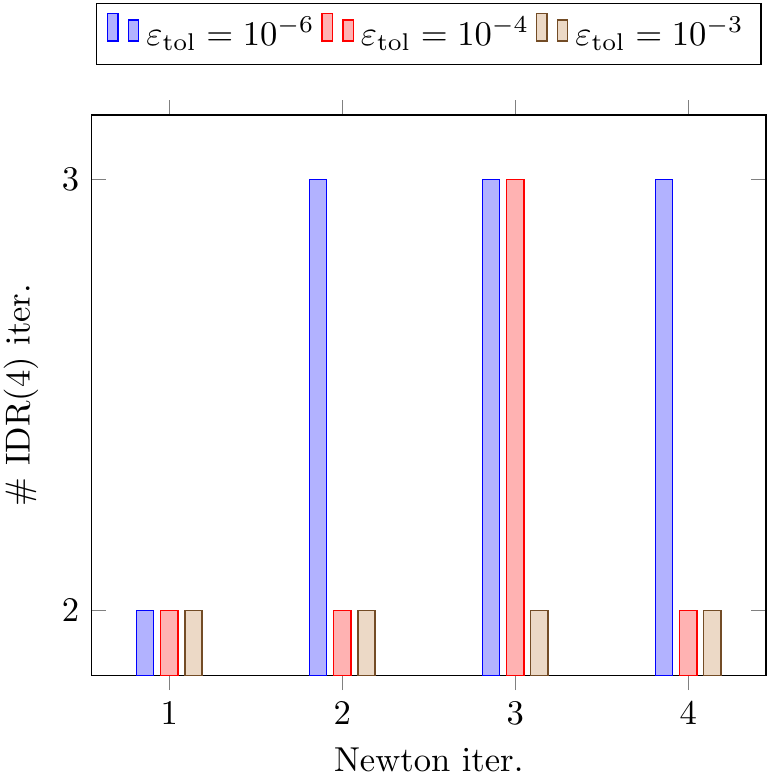}}
    \qquad
    \subfigure[$h=40$]
        {\label{fig:iter_10th_40}
           \includegraphics[width=0.46\textwidth]{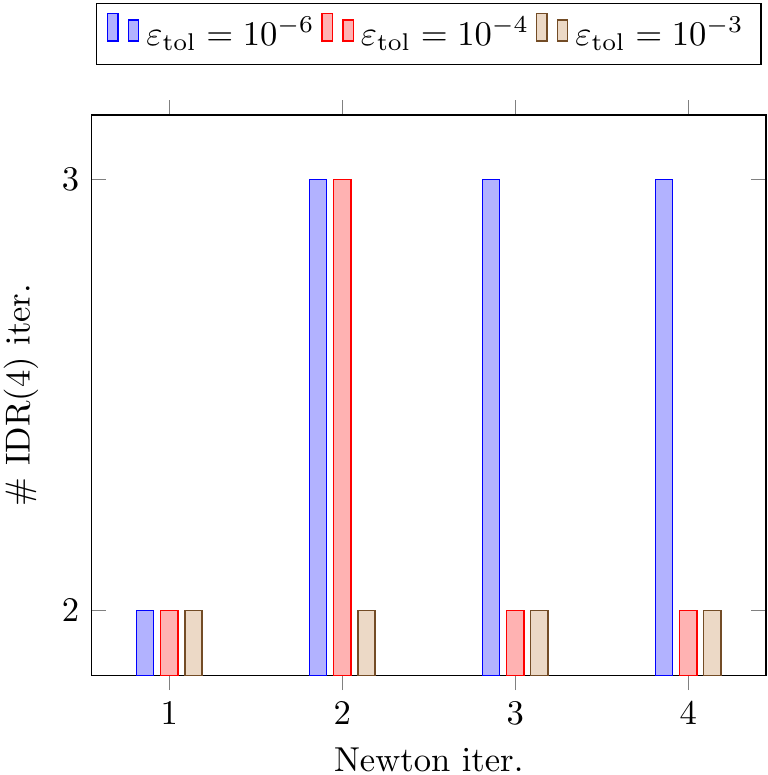}}
    \vspace{-0.3cm}
    \caption{Number of IDR($4$) iterations at the $10$-th time step}\label{fig:idr_iter_10th}
\end{figure}
We have already observed that the performance of our modeling is robust and convergence of the preconditioned Krylov solver happens quickly. Next, we report the computational time for computing the preconditioner $P_1$ and applying the preconditioned IDR($4$) solver of the first Newton step for the first time step for different FVM discretization mesh sizes. We solve the Jacobian system up to an accuracy of $10^{-4}$, and the computational results are given by Table~\ref{tab:timing}. Here ``-'' represents running out of memory, $\# D_F$ represents the size of the Jacobian system, $t_{S^1}$ denotes the time to compute the Schur complement
in $P_1$, and all the time are measured in seconds.

\begin{table}[H]
    \centering
    \caption{\footnotesize Computational time for the 1st Newton iteration}
    \label{tab:timing}
    \vspace{0.3cm}
    \begin{tabular}{ccccc}
        \toprule
        $h$  & $\# D_F$ & $t_{S^1}$  & IDR($4$) & {\tt backslash}  \\
        \midrule
        40   & 1,03e+05 & $3.85$      & 0,25     & 0,13 \\ 
        \addlinespace
        20  & 2,01e+05 & $8,12$      & 0,52     & 0,36 \\ 
        \addlinespace
        10  & 3,97e+05 & $17,84$     & 1,06     & 1,18 \\ 
        \addlinespace
        5   & 7,91e+05 & $38,44$     & 2,13     & 1054,62 \\ 
        \addlinespace
        2.5 & 1,58e+06 &$81,42$     & 4,34     & - \\ 
        \bottomrule
    \end{tabular}
\end{table}

The computational results in Table~\ref{tab:timing} show the advantage of our preconditioner in solving the large-scale Jacobian system over the direct solver. The time to solve the preconditioned system using the IDR(4) solver scales linearly with the system size, and is much smaller than the time to apply the direct solver when the  mesh sizes are smaller than $20$. For large-scale Jacobian systems, the direct solver either takes up too much CPU time or fails to solve the Jacobian system due to running out of memory. For
smaller Jacobian systems, the direct solver shows the advantage over preconditioned Krylov solvers. This is primarily because there is a big overhead when applying the preconditioned IDR(4) solver while the {\tt backslash} operator is highly optimal for smaller systems. The time to compute the Schur complement in preconditioner $P_1$ scales almost linearly with the system sizes.

\section{Conclusions}
In this paper, we studied the modeling and simulation of pipeline gas networks. We applied the finite volume method (FVM) to discretize the incompressible isothermal Euler equation, and compared it with the finite difference method (FDM). Numerical results show the advantage of the FVM over the FDM. To model gas networks, we introduced the SDF gas network, which represents the topology of the network interconnection and reduces the size of the algebraic constraints of the
resulting differential algebraic equation (DAE) compared with current research. To simulate such a DAE system, we proposed the \textit{direction following} (DF) ordering of the edges of the SDF network. Through such an DF ordering, we exploited the structure of the system matrix and proposed an efficient preconditioner to solve the DAE. Numerical results show the advantage of our algorithms.

\bibliographystyle{unsrt}
\bibliography{refer}
\end{document}